\newtheorem{Theorem}{Theorem}[section]
\newtheorem{Lemma}[Theorem]{Lemma}
\newtheorem{Proposition}[Theorem]{Proposition}
\newtheorem{Corollary}[Theorem]{Corollary}
\theoremstyle{remark}
\newtheorem{Remark}[Theorem]{Remark}
\numberwithin{equation}{section}
\def\bar{\overline}
\def\sub{\subseteq}
\def\iso{\cong}
\def\onto{\twoheadrightarrow}
\def\isoto{\overset{\sim}{\longrightarrow}}
\def\epsilon{\varepsilon}
\def\kk{{\mathbbm k}}
\def\ba{\text{\boldmath$a$}}
\def\bb{\text{\boldmath$b$}}
\def\bc{\text{\boldmath$c$}}
\def\bd{\text{\boldmath$d$}}
\def\be{\text{\boldmath$e$}}
\def\bp{\text{\boldmath$p$}}
\def\rn{{\mathrm n}}
\def\rs{{\mathrm s}}
\newcommand{\Tab}{\operatorname{Tab}}
\def\diag{\operatorname{diag}}
\def\lmod{\!\operatorname{-mod}}
\def\Lie{\operatorname{Lie}}
\def\tw{{\operatorname{tw}}}
\def\pr{{\operatorname{pr}}}
\def\ab{{\operatorname{ab}}}
\def\ad{\operatorname{ad}}
\def\Ann{\operatorname{Ann}}
\def\col{\operatorname{col}}
\def\gr{\operatorname{gr}}
\def\Mat{\operatorname{Mat}}
\def\row{\operatorname{row}}
\def\sspan{\operatorname{span}}
\def\C{{\mathbb C}}
\def\F{{\mathbb F}}
\def\Z{{\mathbb Z}}
\def\GL{\mathrm{GL}}
\def\b{\mathfrak b}
\def\g{{\mathfrak g}}
\def\gl{\mathfrak{gl}}
\def\h{\mathfrak h}
\def\l{\mathfrak{l}}
\def\m{\mathfrak m}
\def\p{\mathfrak p}
\def\q{\mathfrak{q}}
\def\sl{\mathfrak{sl}}
\def\t{\mathfrak t}
\def\u{\mathfrak{u}}
\def\a{\mathfrak{a}}
\def\cF{\mathcal{F}}
\def\te{\tilde{e}}
\title{\boldmath Minimal dimensional representations of reduced enveloping algebras for $\gl_n$}
\author{Simon M.~Goodwin and Lewis Topley}
\address{School of Mathematics,
University of Birmingham,
Birmingham, B15 2TT,
UK}
\email{s.m.goodwin@bham.ac.uk}
\address{School of Mathematics, Statistics and Actuarial Science, University of Kent,
Canterbury, Kent CT2 7FS, UK}
\email{L.Topley@kent.ac.uk}
\thanks{2010 {\it Mathematics Subject Classification}: 17B10, 17B37.}
\begin{document}

\begin{abstract}
Let $\g = \gl_N(\kk)$, where $\kk$ is an
algebraically closed field of characteristic $p > 0$, and $N \in \Z_{\ge 1}$.
Let $\chi \in \g^*$ and denote by $U_\chi(\g)$ the corresponding
reduced enveloping algebra.  The Kac--Weisfeiler conjecture, which was proved
by Premet, asserts that any finite dimensional $U_\chi(\g)$-module has dimension
divisible by $p^{d_\chi}$, where $d_\chi$ is half the dimension of the coadjoint
orbit of $\chi$.  Our main theorem gives a classification of  $U_\chi(\g)$-modules of dimension $p^{d_\chi}$.
As a consequence, we deduce that they are all parabolically induced from a 1-dimensional
module for $U_0(\h)$ for a certain Levi subalgebra $\h$ of $\g$; we view this as a modular
analogue of M{\oe}glin's theorem on completely primitive ideals in $U(\gl_N(\C))$.
To obtain these results, we reduce to the case
$\chi$ is nilpotent, and then classify the 1-dimensional modules for the corresponding
restricted $W$-algebra.
\end{abstract}

\maketitle

\section{Introduction}

Let $\kk$ be an algebraically closed field of characteristic $p>0$, and $N \in \Z_{\ge 1}$.
Let $G := \GL_N(\kk)$ and $\g := \gl_N(\kk) = \Lie G$.
For $x \in \g$, we write $x^{[p]}$ for the $p$th power of $x$ as a matrix,
and recall that $x \mapsto x^{[p]}$ is the $p$-power map for the restricted Lie algebra
structure on $\g$.  Also we write $x^p$ for the $p$th power
of $x$ in the universal enveloping algebra $U(\g)$ of $\g$.
Then the elements $x^p-x^{[p]}$ are central elements in $U(\g)$, and
the $p$-centre $Z_p(\g)$ of $U(\g)$ is defined to be the subalgebra generated by
$\{x^p - x^{[p]} \mid x \in \g\}$.  It is well-known that $Z_p(\g)$ is $G$-isomorphic to the Frobenius twist
$S(\g)^{(1)}$ of the symmetric algebra on $\g$ and that $U(\g)$ is free of rank $p^{\dim \g}$
over $Z_p(\g)$.

For an irreducible $U(\g)$-module $M$, the central elements $x^p-x^{[p]}$
act on $M$ as $\chi(x)^p$ for some $\chi \in \g^*$, thanks to Quillen's lemma.
We define the ideal $J_\chi$ of $U(\g)$ to be generated
by $\{x^p - x^{[p]} - \chi(x)^p \mid x \in \g\}$, and the reduced
enveloping algebra associated to $\chi$ to be $U_\chi(\g) := U(\g)/J_\chi$.
Then we have seen that any irreducible $U(\g)$-module factors through
$U_\chi(\g)$ for some $\chi \in \g^*$, and that $\dim U_\chi(\g) = p^{\dim \g}$.

Reduced enveloping algebras $U_\chi(\g)$, are defined more generally
for the Lie algebra $\g$ of a reductive algebraic group $G$ over $\kk$,
and their representation theory attracted a great deal of research interest
from leading mathematicians including Friedlander--Parshall, Humphreys,
Jantzen, Kac and Premet in the late 20th century, we refer to the survey articles
\cite{JaLA} and \cite{Hu} for an overview.  There has been continued interest
and progress in the representation theory of reduced enveloping algebra, a notable
advance being the proof by Bezrukavnikov--Mirkovic in \cite{BM} of a conjecture
of Lusztig regarding irreducible modules, for $p$ sufficiently large.
An important conjecture of Kac--Weisfeiler stated in \cite{KWirr} asserts that, for $G$ simple, the dimension of
a $U_\chi(\g)$-module has dimension divisible by $p^{d_\chi}$, where $d_\chi$ is
half the dimension of the coadjoint orbit of $\chi$, and was
proved by Premet in \cite[Theorem I]{PrKW} (under some mild restrictions on $G$ and $p$).
The case $\g = \gl_N(\kk)$ can be deduced directly if $p \nmid N$; also
the case $p \mid N$ can now be obtained from an alternative proof by
Premet in \cite[\S2.6]{PrST}.
We also mention that Friedlander--Parshall
previously proved the conjecture for $\g = \sl_N(\kk)$ and $p \nmid N$ in \cite[Theorem~5.1]{FPind}.
Consequently $p^{d_\chi}$ is smallest dimension of a $U_\chi(\g)$-module,
so we refer to $p^{d_\chi}$-dimensional modules for $U_\chi(\g)$ as {\em minimal dimensional
modules}.  We note that there is a relatively straightforward way, via parabolic
induction, to construct minimal dimensional $U_\chi(\g)$-modules for  $\g = \gl_N(\kk)$, as was
first observed by Friedlander--Parshall in \cite[Corollary~5.2]{FPind}.

In this paper we classify the minimal dimensional $U_\chi(\g)$-modules (for $\g = \gl_N(\kk)$),
as stated in Theorem~\ref{T:main}.  As a consequence we
show that they all can be obtained by parabolically inducing a 1-dimensional
$U_0(\h)$-module for a certain Levi subalgebra $\h$ of $\g$, as
stated in Corollary~\ref{C:moeglin}.
Both of these results are formulated for $\chi \in \g^*$
nilpotent, but as explained in \S\ref{ss:gl} there is a reduction to this case.
We note that Corollary~\ref{C:moeglin}
can be viewed as a modular analogue of M{\oe}glin's theorem, from \cite{Moe} on completely prime ideals
of $U(\gl_N(\C))$.  Further, we remark some of our methods adapt those of Brundan
in \cite{BrM}, in which he gives an alternative proof of M{\oe}glin's theorem.

We require some notation to state our main results.  This is all set out in detail
in Section~\ref{S:prelims}, and here we only point out the necessary parts for the statements of
Theorem~\ref{T:main} and Corollary~\ref{C:moeglin}.

The trace form on $\g := \gl_N(\kk)$ is denoted $(\cdot\,,\cdot)$ and allows us
to identify $\g \iso \g^*$.  Consequently, we can talk about Jordan decomposition
of elements of $\g^*$ and nilpotent elements of $\g^*$.  We let $\b$ be the Borel
subalgebra of upper triangular matrices and $\t$ the maximal toral subalgebra of
diagonal matrices.

Let $\bp = (p_1 \le p_2 \le \dots \le p_n)$ be a partition of $N$, and let
$\pi$ be a pyramid associated to $\bp$.   This means that $\pi$ is diagram with $N$ boxes
organised in rows, with row lengths given by $\bp$ as defined in \S\ref{ss:pyramid};
further the boxes in $\pi$ are labelled from 1 to $N$ along rows starting from the top row.
There is some choice of the pyramid $\pi$, and much of the notation
below is dependant on this choice; as the results are all valid for any choice of $\pi$, we choose
to work in this generality, and just note that the left justified pyramid is one choice that can be made.

From $\pi$ we define the nilpotent element $e \in \g$ as in \eqref{e:enilp}, and
let $\chi := (e,\cdot) \in \g^*$.
Then $\chi \in \g^*$ is nilpotent and
as we range over all partitions $\bp$ of $N$, we get representatives
of all coadjoint $G$-orbits of nilpotent elements of $\g^*$.
As explained in \S\ref{ss:pyramid}, we have that $\chi$ is in standard Levi form with respect to $\b$.
A good grading of $\g$ for $e$ is defined in \eqref{e:goodgrading}, and from
this we can define the parabolic subalgebra $\p$ with Levi factor $\h$ as in
\eqref{e:phandm}.
We note that $\t$ is contained in $\p$ and $\h$, but that $\b \nsubseteq \p$.

Let $\F_p \sub \kk$ denote the field of $p$ elements.
We define $\Tab_{\kk}(\pi)$ to be fillings
of the boxes of $\pi$ with elements from $\kk$; and we
define $\Tab_{\F_p}(\pi) \sub \Tab_{\kk}(\pi)$ to be the filling
with elements from $\F_p$.
We refer to elements of $\Tab_{\kk}(\pi)$ as $\pi$-tableau.
Given $A \in \Tab_{\kk}(\pi)$, we denote the entry in the box labelled $i$ in $\pi$
by $a_i$.  Let $\epsilon_1,\dots,\epsilon_N$ be the standard basis of $\t^*$ and
define $\lambda_A = \sum_{i=1}^N a_i \epsilon_i \in \t^*$ for $A \in \Tab_{\kk}(\pi)$.
We say that $A \in \Tab_{\kk}(\pi)$ is {\em column connected} if $a_i = a_j+1$ whenever
box $i$ is directly above box $j$ in $\pi$.

The weight
$\rho \in \t^*$ is defined in \eqref{e:rho}: it is a convenient renormalization
of the half sum of positive roots
corresponding to $\b$.
Also we define $\bar \rho \in \t^*$ in  \eqref{e:barrho}, which
is the half sum of positive roots for a Borel subalgebra
of $\p$, with a convenient renormalization.

We recap some established representation theory of $U_\chi(\g)$ and interpret it in our notation,
see for example \cite[Section 10]{JaLA}, more detail is given in \S\ref{ss:modules}.
Since $e \in \b$, we have that $\chi(\b) = 0$.
Given $A \in \Tab_{\F_p}(\pi)$ we define $\kk_A$ to be the 1-dimensional $U_0(\b)$-module
on which $\t$ acts via $\lambda_A - \rho$, and the baby Verma module to be
$Z_\chi(A) = U_\chi(\g) \otimes_{U_0(\b)} \kk_A$.  It is known that
$Z_\chi(A)$ has a unique maximal submodule, and we denote the simple head of $Z_\chi(A)$ by
$L_\chi(A)$.  Further, any
irreducible $U_\chi(\g)$-module is isomorphic to $L_\chi(A)$ for some $A \in \Tab_{\F_p}(\pi)$, and
for $A,A' \in \Tab_{\F_p}(\pi)$,
we have $L_\chi(A) \iso L_\chi(A')$ if and only if $A$ is row equivalent to $A'$. We recall that we
say that $A$ is row equivalent to $A'$ if we can obtain $A'$ from $A$ by reordering the entries in rows.

We are now in a position to state our main theorem giving a classification of
minimal dimensional $U_\chi(\g)$-modules.

\begin{Theorem} \label{T:main}
Let $\g = \gl_N(\kk)$, let $\pi$ be a pyramid corresponding to a partition
$\bp$ of $N$,
and let $\chi$ be the nilpotent element of $\g^*$ determined
by $\pi$. For $A \in \Tab_{\F_p}(\pi)$, we have that $L_\chi(A)$ is a minimal
dimensional $U_\chi(\g)$-module if and only if $A$ is row equivalent to a column connected
$\pi$-tableau.
\end{Theorem}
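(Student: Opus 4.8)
Here is the plan. Following the roadmap in the abstract, I would transport the question to the restricted finite $W$-algebra $U_\chi(\g,e)$ and there classify the one-dimensional modules. The first step is to recall the modular analogue of Skryabin's equivalence: the generalised Gelfand--Graev module $Q_\chi = U_\chi(\g)\otimes_{U_\chi(\m)}\kk_\chi$, built from the good grading of $\g$ for $e$ with $\dim\m = d_\chi$, is a projective generator for $U_\chi(\g)\lmod$ and is free of rank $p^{d_\chi}$ as a right module over $U_\chi(\g,e):=\operatorname{End}_{U_\chi(\g)}(Q_\chi)^{\operatorname{op}}$; hence $V\mapsto Q_\chi\otimes_{U_\chi(\g,e)}V$ is a Morita equivalence $U_\chi(\g,e)\lmod\isoto U_\chi(\g)\lmod$ multiplying dimensions by $p^{d_\chi}$. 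Consequently a simple $U_\chi(\g)$-module is minimal dimensional exactly when the simple $U_\chi(\g,e)$-module matching it is one-dimensional, so it remains to (i) match the combinatorial parametrisations and (ii) decide for which $A$ the associated simple $U_\chi(\g,e)$-module is one-dimensional.

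For (i) I would use the triangular decomposition of $U_\chi(\g,e)$ attached to the good grading: its "Cartan" is a finite-dimensional commutative subalgebra whose simple modules are indexed by $\Tab_{\F_p}(\pi)$ modulo row equivalence (via weights built from $\lambda_A$, shifted by $\bar\rho$), producing a highest weight theory whose baby Verma modules correspond under the equivalence above to the $Z_\chi(A)$. Establishing that last point is a Whittaker-vector computation inside $Z_\chi(A)$, parallel to the known characteristic-zero statement; combined with the recalled classification of simple $U_\chi(\g)$-modules it identifies the simple head $L(A)$ of the $A$-th $W$-algebra baby Verma with the partner of $L_\chi(A)$. Then the theorem reduces to the assertion: $L(A)$ is one-dimensional if and only if $A$ is row equivalent to a column connected $\pi$-tableau.

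For the backward implication I would exhibit a one-dimensional $U_\chi(\g,e)$-module of highest weight a given column connected $A'$; this is also the source of Corollary~\ref{C:moeglin}. Parabolic induction enters here: the parabolic $\p$ and Levi $\h$ from the good grading satisfy $\chi(\h)=0$, and for column connected $A'$ the restriction of $\lambda_{A'}$ to $\h$ yields a one-dimensional $U_0(\h)$-module whose parabolic induction has, in its $W$-algebra incarnation, a one-dimensional head of highest weight $A'$ --- column connectedness being precisely what makes the $\bar\rho$-shift between a column of $\pi$ and its subcolumn cancel. (Alternatively one reduces mod $p$ the explicit one-dimensional modules of the characteristic-zero finite $W$-algebra for $\gl_N$.) For the forward implication I would invoke the shifted-Yangian presentation of $U_\chi(\g,e)$ reduced mod $p$ and argue as Brundan does in \cite{BrM}: a one-dimensional module annihilates all the raising and lowering generators, so the defining relations of the truncated shifted Yangian become scalar identities among the images of the "Cartan" generators; feeding in the highest weight $\lambda_A$ and working up each column of $\pi$, these identities force successive entries of a column to differ by $1$ after reordering within rows, i.e. force $A$ to be row equivalent to a column connected tableau.

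The main obstacle is this forward implication: extracting the rigid arithmetic constraint on $\lambda_A$ from the bare existence of a one-dimensional quotient. Over $\C$ this is the delicate direction in the M{\oe}glin/Brundan analysis (shift-equivalence of highest weights); over $\kk$ there are extra subtleties --- weights live in $\t^*$ over $\F_p$, so "consecutive integers" must be read with an eye on $p$ and on which baby Vermas are linked, and one must check that the reduction-mod-$p$ of the shifted-Yangian presentation interacts correctly with the restricted structure. A subsidiary, pervasive bookkeeping issue is keeping row equivalence, column connectedness, and the dependence on the chosen pyramid $\pi$ mutually consistent across the two sides of the Morita equivalence.
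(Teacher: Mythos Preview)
Your overall architecture---Premet's equivalence reduces the problem to classifying the one-dimensional $U_0(\g,e)$-modules, and the truncated shifted Yangian presentation gives access to those---is exactly the paper's. But the paper and you diverge on two points of execution.

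\textbf{The label-matching step (your step (i)) is bypassed in the paper by a counting argument.} You propose to set up a triangular decomposition and highest weight theory for the restricted $W$-algebra, then verify that the $W$-algebra baby Verma of label $A$ corresponds under the equivalence to $Z_\chi(A)$. The paper does \emph{not} do this. Instead it argues: (a) directly in $U_\chi(\g)\lmod$, Theorem~\ref{T:1dsimplespara} exhibits a highest weight vector for $\b$ of weight $\lambda_A-\rho$ inside the parabolically induced module $N_\chi(A)=U_\chi(\g)\otimes_{U_0(\p)}\bar\kk_A$, whence $L_\chi(A)\cong N_\chi(A)$ has dimension $p^{d_\chi}$ for column connected $A$; (b) by the $W$-algebra classification the number of isomorphism classes of one-dimensional $U_0(\g,e)$-modules equals the number $c_\pi$ of row equivalence classes containing a column connected tableau; (c) by Premet's equivalence there are exactly $c_\pi$ minimal dimensional $U_\chi(\g)$-modules, and step~(a) already produced $c_\pi$ pairwise non-isomorphic ones. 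This counting finishes the proof without ever identifying which $W$-algebra simple corresponds to which $L_\chi(A)$. The paper relegates the explicit bijection $\widetilde\kk_{\bar A}\leftrightarrow L_\chi(A)$ to Remark~\ref{R:corresp}, where it is sketched via Whittaker coinvariants of $N_\chi(A)^*$; it is not needed for the theorem. Your route through a $W$-algebra highest weight theory is plausible but considerably harder to make rigorous in this setting, and the paper shows it is unnecessary.

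\textbf{The forward direction is done in two decoupled stages in the paper.} You propose to extract the column-connectedness constraint directly from the Yangian relations at the restricted level. The paper instead first classifies \emph{all} one-dimensional $U(\g,e)$-modules over $\kk$ (Theorem~\ref{T:1dW}): using Premet's computation of the abelianisation $Y_{n,l}(\sigma)^{\ab}$ (Lemma~\ref{L:1dcal}) one sees that the $l$ generators $\dot D_i^{(r)}$ for $0<r\le p_i-p_{i-1}$ act freely, and a combinatorial lemma of Brundan shows that any such assignment arises from a column connected tableau in $\Tab_\kk(\pi)$. Only afterwards does the paper determine which of these factor through $U_0(\g,e)$ (Theorem~\ref{T:1dsforrestw}), and this step is \emph{geometric}, not Yangian-theoretic: one computes the $p$-character $\psi_A\in\p^*$ annihilating $\widetilde\kk_A$ and shows $\psi_A$ is twisted-$M$-conjugate to $0$ if and only if $e+\diag(a_1^p-a_1,\dots,a_N^p-a_N)$ is nilpotent, i.e.\ all $a_i\in\F_p$. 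Your sketch bundles these two stages together and flags the restricted structure as a ``subtlety'' without indicating a mechanism; the paper's separation of concerns is what makes this tractable.
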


To state Corollary~\ref{C:moeglin}, we have to define certain 1-dimensional
$U_0(\p)$-modules.  As explained in \S\ref{ss:modules}, given $A \in \Tab_{\F_p}(\pi)$, we have that $\lambda_A - \bar \rho$
is the weight of a 1-dimensional $U_0(\h)$-module if and only $A$ is column connected.
For column connected $A \in \Tab_{\F_p}(\pi)$, we define $\bar \kk_A$ to be the one dimensional $U_0(\p)$-module
obtained by inflating the 1-dimensional $U_0(\h)$-module with weight $\lambda_A - \bar \rho$.  In
Theorem~\ref{T:1dsimplespara}, we show that
$L_\chi(A) \iso U_\chi(\g) \otimes_{U_0(\p)} \bar \kk_A$ for column connected $A \in \Tab_{\F_p}(\pi)$.
Combining this with Theorem~\ref{T:main}, we immediately deduce.

\begin{Corollary} \label{C:moeglin}
Let $\g = \gl_N(\kk)$, let $\pi$ be a pyramid corresponding to a partition
$\bp$ of $N$,
let $\chi$ be the nilpotent element of $\g^*$ determined
by $\pi$, and let $\p$ be the parabolic subalgebra
of $\g$ determined by $\pi$.
Let $L$ be a minimal dimensional $U_\chi(\g)$-module.  Then $L \iso U_\chi(\g) \otimes_{U_0(\p)} \bar \kk_A$ for some
column connected $A \in \Tab_{\F_p}(\pi)$.
\end{Corollary}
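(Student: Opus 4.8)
The plan is to deduce the corollary directly by combining Theorem~\ref{T:main} and Theorem~\ref{T:1dsimplespara} with the recap of the representation theory of $U_\chi(\g)$. First I would observe that, by the classification of irreducible $U_\chi(\g)$-modules recalled above, a minimal dimensional module $L$ is isomorphic to $L_\chi(A)$ for some $A \in \Tab_{\F_p}(\pi)$. Since $\dim L = p^{d_\chi}$, the ``only if'' direction of Theorem~\ref{T:main} tells us that $A$ is row equivalent to some column connected $A' \in \Tab_{\F_p}(\pi)$.

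Next I would use the fact, also from the recap, that $L_\chi(A) \iso L_\chi(A')$ whenever $A$ is row equivalent to $A'$, so that $L \iso L_\chi(A')$ with $A'$ column connected. Finally, applying Theorem~\ref{T:1dsimplespara} to the column connected tableau $A'$ identifies $L_\chi(A')$ with the parabolically induced module $U_\chi(\g) \otimes_{U_0(\p)} \bar \kk_{A'}$, which yields the asserted description of $L$.

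There is no real obstacle here: the corollary is an immediate formal consequence of the two theorems, exactly as the phrase ``we immediately deduce'' in the excerpt indicates. All of the substantive work lies in establishing those inputs: Theorem~\ref{T:main}, whose ``only if'' direction is the crux (one must show that minimal dimension forces the associated tableau to be row equivalent to a column connected one, which is where the analysis of the restricted $W$-algebra and its $1$-dimensional modules enters), and Theorem~\ref{T:1dsimplespara}, which requires checking that the one-dimensional $U_0(\p)$-module $\bar\kk_{A'}$ is well defined for column connected $A'$ and that parabolically inducing it produces precisely the simple head $L_\chi(A')$ rather than a larger quotient of the corresponding baby Verma module.
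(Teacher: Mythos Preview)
Your proposal is correct and matches the paper's approach exactly: the paper likewise states that Corollary~\ref{C:moeglin} follows immediately from Theorem~\ref{T:main} together with Theorem~\ref{T:1dsimplespara}, using precisely the chain $L \iso L_\chi(A) \iso L_\chi(A') \iso N_\chi(A') = U_\chi(\g) \otimes_{U_0(\p)} \bar\kk_{A'}$ that you describe.
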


We give an outline of the main ideas in the proof of Theorem~\ref{T:main}.
The key step is to rephrase the problem in terms of $W$-algebras through
Premet's equivalence.  Let $U(\g,e)$ be the finite $W$-algebra as in \cite[Definition 4.3]{GT};
in fact we use an equivalent definition in this paper as a subalgebra of $U(\p)$
as explained in \S\ref{ss:walg}.
The restricted $W$-algebra $U_0(\g,e)$ is as in \cite[Definition 8.5]{GT}, though as explained in \S\ref{ss:walg}
our notation in this paper differs from that in \cite{GT} and we view $U_0(\g,e)$ as a subalgebra of $U_0(\p)$.
The definitions of these $W$-algebras in \cite{GT} are inspired by work of Premet,
where $U(\g,e)$ has appeared for $p$ sufficiently large and is obtained from
a characteristic 0 finite $W$-algebra via reduction modulo $p$, see for example \cite[\S2.5]{PrCQ}.

We recall that Premet's equivalence, which is stated in Theorem~\ref{T:premequiv},
 gives an equivalence of categories between $U_\chi(\g)\lmod$ and $U_0(\g,e)\lmod$.
Moreover, through this equivalence a $U_0(\g,e)$-module of dimension $m$ corresponds to
a $U_\chi(\g)$-module of dimension $mp^{d_\chi}$.  Therefore, in order to prove Theorem
\ref{T:main}, we want to classify the 1-dimensional $U_0(\g,e)$-modules.

In fact we classify all 1-dimensional $U(\g,e)$-modules and determine
which ones factor through the quotient map $U(\g,e) \onto U_0(\g,e)$.  We show that
$U(\g,e)$ is a modular truncated shifted Yangian, see Theorem~\ref{T:gensandPBW}.
This is proved by following the methods of Brundan--Kleshchev in \cite{BKshift},
but now using the PBW theorem for $U(\g,e)$ given in \cite[Theorem 7.3]{GT} and reduction modulo $p$ arguments.
In particular, this allows us to determine the abelianisation $U(\g,e)^{\ab}$ of $U(\g,e)$,
by observing that a calculation by Premet from \cite[Theorem 3.3]{PrCQ} applies in characteristic $p$.
As mentioned above we view $U(\g,e)$ as a subalgebra of $U(\p)$.
Thus we obtain 1-dimensional $U(\g,e)$-modules by restricting 1-dimensional $U(\p)$-modules.
Rather than using the labelling of 1-dimensional $U(\p)$-modules
as $\bar \kk_A$ for column connected $A$ in $\Tab_\kk(\pi)$ above,
we in fact consider $U(\p)$-modules $\widetilde \kk_A$, where a different shift is used.
Using the description of $U(\g,e)^{\ab}$, we are able to
deduce that the restriction of the modules $\widetilde \kk_A$ for column connected $A \in \Tab_\kk(\pi)$ give all
of the 1-dimensional $U(\g,e)$-modules.
Moreover, for column connected $A,A' \in \Tab_\kk(\pi)$
we deduce that the restrictions of $\widetilde \kk_A$ and $\widetilde \kk_{A'}$ are isomorphic
if and only if $A$ is row equivalent to $A'$.  We denote $\widetilde \kk_A$ restricted to $U(\g,e)$
by $\widetilde \kk_{\bar A}$.  Our methods for this classification of 1-dimensional $U(\g,e)$-modules
are similar to those used by Brundan in \cite[Section 2]{BrM}.

Our next step is to show, for column connected $A \in \Tab_\kk(\pi)$, that
$\widetilde \kk_{\bar A}$ factors to a module for $U_0(\g,e)$ if and only
if $A \in \Tab_{\F_p}(\pi)$.  This deduction is not immediate and is given in Theorem~\ref{T:1dsforrestw}.
From here we are in a position to apply Premet's equivalence to determine the minimal dimensional
$U_\chi(\g)$-modules.  A key step for this is given by
Theorem~\ref{T:1dsimplespara}, which says that $L_\chi(A) \iso U_\chi(\g) \otimes_{U_0(\p)} \bar \kk_A$
for column connected $A \in \Tab_{\F_p}(\pi)$.
This requires us to identify a vector in
$U_\chi(\g) \otimes_{U_0(\p)} \bar \kk_A$, which spans a 1-dimensional
$U(\b)$-module with weight $\lambda_A - \rho$.
From this we can deduce that $L_\chi(A)$ is minimal dimensional if $A$ is column connected.
By applying our classification of 1-dimensional $U_0(\g,e)$-modules and Premet's equivalence, we are
thus able to conclude that the set $L_\chi(A)$ for $A \in \Tab_{\F_p}(\pi)$ column connected (up to row equivalence)
gives all of the minimal dimensional $U_\chi(\g)$-modules, which proves Theorem~\ref{T:main}.
In fact it is possible to show that
through Premet's equivalence $\widetilde \kk_{\bar A}$ corresponds to $L_\chi(A)$;
this is discussed in Remark~\ref{R:corresp}.

We end the introduction with some remarks about minimal dimensional
modules for reduced enveloping algebras $U_\chi(\g)$ for $\g$ the Lie
algebra of a reductive algebraic group over $\kk$.
The assertion that there is a $U_\chi(\g)$-module of dimension $p^{d_\chi}$ is now known as Humphreys' conjecture,
see \cite[\S8]{Hu}, though we note that the question was asked earlier by Kac in his
review of \cite{PrKW} on the {\em Mathematical Reviews}.  There has been lots of progress
on this conjecture recently and thanks to the results of Premet in \cite{PrMF}
it is now known to be true for $p$ sufficiently large; further Premet states that in
forthcoming work he will give an explicit lower bound on $p$.  The questions of whether
the minimal dimensional modules can be classified, and whether they are parabolically induced
are also of great interest.  We plan to consider these in future work, and note that
the characteristic 0 version of the latter is addressed in work of Premet and the second author
in \cite{PT}.

\subsection*{Acknowledgments}
Both authors would like to thank the University of Padova and the Erwin
Schr{\"o}dinger Institute, Vienna, where parts of this work were carried out.
The first author is supported in part by EPSRC grant EP/R018952/1.
The second author gratefully acknowledges funding from the European Commission, Seventh
Framework Programme, Grant Agreement 600376, as well as EPSRC grant EP/N034449/1.
We thank Alexander Premet for helpful correspondence about this work, and the referee
for useful comments.

\section{Preliminaries} \label{S:prelims}

\subsection{The general linear Lie algebra and reduced enveloping algebras} \label{ss:gl}

Let $\kk$ be an algebraically closed field of characteristic $p > 0$ and let $N \in \Z_{\ge 1}$.
Throughout this paper $G := \GL_N(\kk)$ and $\g := \gl_N(\kk)$ is the
Lie algebra of $G$, which is spanned
by the matrix units $\{e_{i,j} \mid 1 \le i,j \le N\}$.
Let $(\cdot\,,\cdot) : \g \times \g \to \kk$ denote the
trace form associated to the natural representation of $G$,
which we use to identify $\g \iso \g^*$ as $G$-modules.
The universal enveloping algebra of $\g$ is denoted $U(\g)$.

We occasionally need to call on some results from characteristic zero and so we fix some
more notation. We let $\g_\Z$ denote the general linear Lie $\Z$-algebra $\gl_N(\Z)$ and we write
$\g_\C$ for $\gl_N(\C)$. Throughout we use the identifications $\g \iso \g_\Z \otimes_\Z \kk$ and
$\g_\C \iso \g_\Z \otimes_\Z \C$, and by a slight abuse of notation we view the matrix units $e_{i,j}$
as elements of $\g_\Z$ or $\g_\C$ when it is convenient to do so.
We often consider subalgebras of $\g$, which are spanned by matrix units, so have analogues inside
$\g_\Z$ and $\g_\C$ and we denote them by decorating with subscripts $\Z$ and $\C$.
We mention that
since $\g_\Z$ is a free $\Z$-module the PBW theorem holds for $U(\g_\Z)$, so that $U(\g_\Z)$ is a free $\Z$-module
with a basis consisting of ordered monomials in the matrix units with respect to any choice of total order.

Let $g \in G$, $x \in \g$ and $\chi \in \g^*$.  We write $g \cdot x$ for the image of
$x$ under the adjoint action of $g$, so as matrices $g \cdot x = gxg^{-1}$; this
action extends to an action on $U(\g)$ by algebra automorphisms.
The centralizer of $x$ in $G$
is denoted $G^x := \{g \in G \mid g \cdot x = x\}$ and the centralizer
of $x$ in $\g$ is denoted $\g^x := \{y \in \g \mid [y,x] = 0\}$;
we note that we have $\g^x = \Lie(G^x)$.
We write $G \cdot \chi$ for the coadjoint
orbit of $\chi$.  It is well-known that $\dim(G \cdot \chi)$ is even
and we define $d_\chi := \frac{1}{2} \dim(G \cdot \chi)$.

Let $T \subseteq B \subseteq G$ be the maximal torus and Borel subgroup
consisting of diagonal matrices and upper triangular matrices respectively, and
let $\t := \Lie(T)$, $\b := \Lie(B)$.  We use the notation $\diag(d_1,\dots,d_n)$
to denote the element of $T$ with $d_i$ in the $i$th entry of the diagonal.
We write $X^*(T)$ for the group of characters, and let
$\{\epsilon_1,\dots,\epsilon_N\}$ be the standard basis of $X^*(T)$
defined by $\epsilon_i(\diag(d_1,\dots,d_n)) = d_i$.
Let $\Phi \sub X^*(T)$ be the root system of
$G$ with respect to $T$,
so $\Phi = \{\epsilon_i - \epsilon_j \mid 1 \le i,j \le n, i \ne j\}$.
We write $e_{i,j}$ for the matrix unit that spans the root space corresponding to $\epsilon_i -\epsilon_j$.
The root subgroup corresponding to $\epsilon_i -\epsilon_j$ is the image of $u_{i,j} : \kk \to G$
defined by $u_{i,j}(s) := 1 + se_{i,j}$, and the adjoint action of $u_{i,j}(s)$ on $e_{k,l}$
is given by the formula
\begin{equation} \label{e:adjoint}
u_{i,j}(s) \cdot e_{k,l} = e_{k,l} + s \delta_{j,k}e_{i,l} - s\delta_{l,i}e_{k,j} - s^2 \delta_{j,k}\delta_{l,i} e_{i,j}.
\end{equation}
Where it is convenient we allow ourselves to view a character $\alpha \in X^*(T)$ as an
element of $\t^*$ by writing
$\alpha$ for $d\alpha : \t \to \kk$; this is a slight abuse of notation,
because $d\alpha = 0$ for any $\alpha \in pX^*(T)$.

There is a natural restricted structure on $\g$,
where the $p$-power map $x \mapsto x^{[p]}$ is
given by taking the $p$th power of $x$ as a matrix.  In particular, we note that $e_{i,j}^{[p]} = \delta_{i,j}e_{i,j}$
for $1 \le i,j \le N$.
The $p$-centre of $U(\g)$ is the subalgebra of the centre of $U(\g)$ generated by
$\{e_{i,j}^p - e_{i,j}^{[p]} \mid 1\leq i,j \leq N\}$.  It follows from the PBW theorem that
$U(\g)$ is a free $Z_p(\g)$-module of rank $p^{\dim \g}$. Further, there is a natural identification
$Z_p(\g) \iso \kk[(\g^*)^{(1)}]$, where $(\g^*)^{(1)}$ denotes the Frobenius twist of $\g^*$.
Given $\chi \in \g^*$ we define $J_\chi$ to be the ideal of $U(\g)$ generated by
$\{x^p-x^{[p]} - \chi(x)^p \mid x \in \g\}$, and the {\em reduced enveloping
algebra corresponding to $\chi$} to be $U_\chi(\g) := U(\g)/J_\chi$.

As stated in the introduction, the Kac--Weisfeiler conjecture, which is a
theorem of Premet, states that $p^{d_\chi}$ is a factor of the dimension
of any $U_\chi(\g)$-module.  We refer to $U_\chi(\g)$-modules of dimension $p^{d_\chi}$
as {\em minimal dimensional modules}, and note that such modules are clearly irreducible.

Let $\chi \in \g^*$.  There is unique $x \in \g$ such that $\chi = (x,\cdot)$.
We have a Jordan decomposition $x = x_\rs + x_\rn$
of $x$, and thus a corresponding decomposition $\chi = \chi_\rs + \chi_\rn$.
We say that $\chi$ is nilpotent if $\chi = \chi_\rn$. Next we recall the ``reduction'' to
the case $\chi$ nilpotent in the representation theory of $U_\chi(\g)$ from \cite[Section 3]{FPmod};
as is noted in \cite[Section 8]{FPmod}, this reduction can also be deduced
from \cite[Theorem~2]{KWirr}.
Let $\l = \g^{x_\rs}$, let $\q$ be a parabolic subalgebra of $\g$ with Levi factor
$\l$ and let $\u$ denote the nilradical of $\q$. We can parabolically induce a $U_\chi(\l)$-module $M$, to obtain the $U_\chi(\g)$-module
$U_\chi(\g) \otimes_{U_\chi(\q)} M$, where $M$ is the $U_\chi(\q)$-module on which $\u$ acts trivially.
This gives a functor $U_\chi(\l)\lmod \to U_\chi(\g)\lmod$ and it is proved in \cite[Theorem~3.2]{FPmod} that
this is an equivalence of categories; in turn there is an equivalence
$U_\chi(\l)\lmod \iso U_{\chi_\rn}(\l)\lmod$ as follows from \cite[Corollary~3.3]{FPmod}.
Further, the theory of Jordan normal forms implies that
$\dim(G \cdot \chi) = \dim(L \cdot \chi_\rn) + 2\dim \u$.
Therefore, through the equivalence of categories $U_{\chi_\rn}(\l)\lmod \iso U_\chi(\g)\lmod$,
minimal dimensional modules for $U_{\chi_\rn}(\l)$ correspond to minimal
dimensional modules for $U_\chi(\g)\lmod$.  This justifies our restriction
to nilpotent $\chi$ in the statements of Theorem~\ref{T:main} and Corollary~\ref{C:moeglin}.

\subsection{Pyramids} \label{ss:pyramid}

We require the combinatorics of pyramids to set up some notation.
For more details on this we refer to \cite[Section 7]{BKshift}.

We fix a partition $\bp = (p_1,\dots,p_n)$ on $N$ with $p_1 \le \cdots \le p_n$.
A pyramid $\pi$ associated to $\bp$ is a diagram with $p_n$ boxes in the bottom row,
$p_{n-1}$ boxes in the row above it, and so forth, stacked in such a way that
every box which is not in the bottom row lies directly above a box in the row beneath it,
and boxes occur consecutively in each row.  The boxes in the pyramid are
numbered along rows from left to right and from top to bottom.
For example, the pyramids associated to the partition $\bp = (2,5)$ are
\begin{equation}
\label{e:somepyramids}
\begin{array}{c}
\begin{picture}(60,24) \put(0,0){\line(1,0){60}}
\put(0,12){\line(1,0){60}} \put(0,24){\line(1,0){24}}
\put(0,0){\line(0,1){24}} \put(12,0){\line(0,1){24}}
\put(24,0){\line(0,1){24}} \put(36,0){\line(0,1){12}}
\put(48,0){\line(0,1){12}}
\put(60,0){\line(0,1){12}}
\put(3,14.5){\hbox{1}}
\put(15,14.5){\hbox{2}}
\put(3,2.5){\hbox{3}}
\put(15,2.5){\hbox{4}}
\put(27,2.5){\hbox{5}}
\put(39,2.5){\hbox{6}}
\put(51,2.5){\hbox{7}}
\end{picture}
\end{array}
,\:\:\:\:
\begin{array}{c}
\begin{picture}(60,24) \put(0,0){\line(1,0){60}}
\put(0,12){\line(1,0){60}} \put(12,24){\line(1,0){24}}
\put(0,0){\line(0,1){12}} \put(12,0){\line(0,1){24}}
\put(24,0){\line(0,1){24}} \put(36,0){\line(0,1){24}}
\put(48,0){\line(0,1){12}}
\put(60,0){\line(0,1){12}}
\put(15,14.5){\hbox{1}}
\put(27,14.5){\hbox{2}}
\put(3,2.5){\hbox{3}}
\put(15,2.5){\hbox{4}}
\put(27,2.5){\hbox{5}}
\put(39,2.5){\hbox{6}}
\put(51,2.5){\hbox{7}}
\end{picture}
\end{array},\:\:\:\:
\begin{array}{c}
\begin{picture}(60,24) \put(0,0){\line(1,0){60}}
\put(0,12){\line(1,0){60}} \put(24,24){\line(1,0){24}}
\put(0,0){\line(0,1){12}} \put(12,0){\line(0,1){12}}
\put(24,0){\line(0,1){24}} \put(36,0){\line(0,1){24}}
\put(48,0){\line(0,1){24}}
\put(60,0){\line(0,1){12}}
\put(27,14.5){\hbox{1}}
\put(39,14.5){\hbox{2}}
\put(3,2.5){\hbox{3}}
\put(15,2.5){\hbox{4}}
\put(27,2.5){\hbox{5}}
\put(39,2.5){\hbox{6}}
\put(51,2.5){\hbox{7}}
\end{picture}\end{array}
\:\:\:\:\text{and}\:\:\:\:
\begin{array}{c}
\begin{picture}(60,24) \put(0,0){\line(1,0){60}}
\put(0,12){\line(1,0){60}} \put(36,24){\line(1,0){24}}
\put(0,0){\line(0,1){12}} \put(12,0){\line(0,1){12}}
\put(24,0){\line(0,1){12}} \put(36,0){\line(0,1){24}}
\put(48,0){\line(0,1){24}}
\put(60,0){\line(0,1){24}}
\put(39,14.5){\hbox{1}}
\put(51,14.5){\hbox{2}}
\put(3,2.5){\hbox{3}}
\put(15,2.5){\hbox{4}}
\put(27,2.5){\hbox{5}}
\put(39,2.5){\hbox{6}}
\put(51,2.5){\hbox{7}}
\end{picture}
\end{array}.
\end{equation}
Let $l = p_n$. The columns of $\pi$ are labelled $1,2,...,l$ from left to right and the rows are
labelled $1,2,...,n$ from top to bottom.  We denote the heights of the columns in $\pi$ by $q_1, q_2,...,q_l$.
The box in $\pi$ containing
$i$ is referred to as the $i$th box, and we write $\row(i)$ and $\col(i)$ for the row and column of the $i$th box
respectively.

We fix a pyramid $\pi$ corresponding to $\bp$ for the rest of this paper.
From $\pi$, we define the {\em shift matrix} $\sigma = (s_{i,j})$ as follows.
For $1 \le i < j \le n$ we let $s_{j,i}$ be the left indentation of the $i$th row of $\pi$
relative to the $j$th row, and we let $s_{i,j}$ be the right indentation of the $i$th row of $\pi$
relative to the $j$th row; also we set $s_{i,i} = 0$.  For example the shift matrices
associated to the pyramids in \eqref{e:somepyramids} are
\begin{equation*}
\label{e:someshifts}
\left(\begin{array}{cc} 0 & 3 \\ 0 & 0 \end{array}\right),
\left(\begin{array}{cc} 0 & 2 \\ 1 & 0 \end{array}\right),
\left(\begin{array}{cc} 0 & 1 \\ 2 & 0 \end{array}\right) \text{ and }
\left(\begin{array}{cc} 0 & 0 \\ 3 & 0 \end{array}\right).
\end{equation*}

\subsection{The nilpotent element and subalgebras}
We define the nilpotent element
\begin{equation} \label{e:enilp}
e := \sum_{\substack{\row(i) = \row(j) \\ \col(i) = \col(j) - 1}} e_{i,j} \in \g.
\end{equation}
For example for each of the pyramids in \eqref{e:somepyramids}, we have
$e = e_{1,2}+e_{3,4}+e_{4,5}+e_{5,6}+e_{6,7}$.
Observe that $e$ has Jordan blocks of size $p_1, p_2,....,p_n$.
We define $\chi := (e,\cdot) \in \g^*$.  We also note that $\chi$ is in standard Levi form
(in the sense of \cite[Definition 3.1]{FPdef}) with respect to
the simple roots corresponding to the Borel subalgebra $\b$.

The first part of the following lemma gives a basis of $\g^e$, and
can be verified by observing that the proof of \cite[Lemma 7.3]{BKshift} is also
valid in positive characteristic.  The second part of the lemma is verified
by direct calculation.

\begin{Lemma}\label{L:centraliserbasis}
Let
$$
c_{i,j}^{(r)} :=
\sum_{\substack{1\leq h,k,\leq N\\ \row(h) = i, \row(k) = j \\ \col(k) - \col(h) + 1 = r}} e_{h,k}
$$
for $0 \leq i,j \leq n$ and $r > s_{i,j}$.
\begin{itemize}
\item[(a)]  The centralizer $\g^e$ of $e$ in $\g$ has basis
$$
\{c_{i,j}^{(r)}  \mid  0 \leq i,j \leq n, s_{i,j} < r \leq s_{i,j} + p_{\min(i,j)}\},
$$
\item[(b)]  We have
$$
[c_{i,j}^{(r)}, c_{k, l}^{(s)}] = \delta_{j,k} c_{i,l}^{(r+s-1)} - \delta_{i,l} c_{k,j}^{(r+s-1)}.
$$
\end{itemize}
\end{Lemma}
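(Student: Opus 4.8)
The plan is to treat part (a) by checking that the (purely formal) argument of \cite[Lemma~7.3]{BKshift} survives reduction modulo $p$, and to treat part (b) by a hands-on bracket computation in $\gl_N$ whose only subtle point is a combinatorial statement about the rows of $\pi$.

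For (a) I would run the three steps of the characteristic-zero argument. First, a direct computation with matrix units shows $[e,c_{i,j}^{(r)}]=0$ for all admissible $i,j,r$; since $e$ and all $c_{i,j}^{(r)}$ lie in $\g_\Z$, indeed have coefficients in $\{0,1\}$, this is a characteristic-free identity, so each $c_{i,j}^{(r)}$ lies in $\g^e$. Second, distinct $c_{i,j}^{(r)}$ are sums of matrix units with pairwise disjoint supports — $e_{h,k}$ occurs in $c_{i,j}^{(r)}$ precisely when $\row(h)=i$, $\row(k)=j$ and $\col(k)-\col(h)+1=r$ — so they are $\kk$-linearly independent. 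Third, there are $p_{\min(i,j)}=\min(p_i,p_j)$ admissible values of $r$ for each pair $(i,j)$, so the number of $c_{i,j}^{(r)}$ is $\sum_{i,j}\min(p_i,p_j)$; and this equals $\dim_\kk\g^e$, because the centralizer in $\gl_N(F)$ of a nilpotent matrix depends only on its Jordan block sizes, with $\dim_F\gl_N(F)^e=\sum_{i,j}\min(p_i,p_j)$ over any field $F$ — it is $\dim_F\operatorname{End}_{F[x]}\big(\bigoplus_i F[x]/(x^{p_i})\big)$, using that $\operatorname{Hom}_{F[x]}(F[x]/(x^a),F[x]/(x^b))$ has dimension $\min(a,b)$ — and $e$ has Jordan blocks of sizes $p_1,\dots,p_n$ by construction. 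Comparing the three steps gives the asserted basis.

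For (b) I would expand, using $[e_{a,b},e_{c,d}]=\delta_{b,c}e_{a,d}-\delta_{a,d}e_{c,b}$,
\[
[c_{i,j}^{(r)},c_{k,l}^{(s)}]=\sum\delta_{b,c}\,e_{a,d}-\sum\delta_{a,d}\,e_{c,b},
\]
the sums running over boxes with $\row(a)=i$, $\row(b)=j$, $\col(b)-\col(a)=r-1$, $\row(c)=k$, $\row(d)=l$, $\col(d)-\col(c)=s-1$. In the first sum $\delta_{b,c}$ forces $b=c$, hence $j=k$ and $\col(a)+r-1=\col(d)-s+1$, so $\col(d)-\col(a)=r+s-2$; for $a$ in row $i$ and $d$ in row $l$ with this property, the \emph{middle box} — the box of row $j$ in column $\col(a)+r-1=\col(d)-s+1$ — is the unique candidate for $b=c$. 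The key point is that this middle box always exists: writing the column range of row $m$ as an interval $[\alpha_m,\beta_m]$, the pyramid condition forces these intervals to be nested, and the hypotheses $r>s_{i,j}$ and $s>s_{j,l}$ unwind, in a short case check on whether row $j$ is narrower or wider than rows $i$ and $l$, to the two inequalities $\alpha_j\le\col(a)+r-1\le\beta_j$. Hence the first sum is exactly $\delta_{j,k}\,c_{i,l}^{(r+s-1)}$; the second sum is $\delta_{i,l}\,c_{k,j}^{(r+s-1)}$ by the symmetric argument, which yields the identity (interpreting $c_{i,l}^{(r+s-1)}$ as $0$ outside its admissible range of superscripts, which is consistent since in that case the corresponding sum is empty).

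I expect the middle-box existence in (b) to be the only real obstacle: the remainder of (b) is routine bookkeeping, and (a) is a formal computation together with a standard dimension count. What makes the middle-box claim work is exactly that the defining inequalities on $r$ and $s$ are sharp enough for the resummation to close up without boundary corrections, so the care lies in matching those inequalities against the geometry of the rows of $\pi$.
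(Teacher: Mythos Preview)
Your proposal is correct and matches the paper's approach exactly: the paper simply notes that part (a) follows because the proof of \cite[Lemma~7.3]{BKshift} goes through verbatim in positive characteristic, and that part (b) is ``verified by direct calculation'', which is precisely what you have carried out in detail. Your middle-box existence check (via the nesting of row intervals and the inequalities $r>s_{i,j}$, $s>s_{j,l}$) is the heart of that direct calculation and is handled correctly.
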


Consider the cocharacter $\mu : \kk^\times \to T \sub G$ defined by
$\mu(t) = \diag(t^{\col(1)},\dots,t^{\col(n)})$.
Using $\mu$ we define the $\Z$-grading
\begin{equation} \label{e:goodgrading}
\g = \bigoplus_{k \in \Z} \g(k) \quad \text{where} \quad \g(k) := \{x \in \g \mid \mu(t) x = t^k x \text{ for all } t\in \kk^\times\} .
\end{equation}
Since the adjoint action of $\mu(t)$ on a matrix unit is
given by $\mu(t) \cdot e_{i,j} = t^{\col(j) - \col(i)} e_{i,j}$,
we have $\g(k) = \sspan\{e_{i,j} \mid \col(j) - \col(i) = k\}$.
From the classification of good gradings in \cite[Section 4]{EK}, we see that the grading in \eqref{e:goodgrading}
is a good grading for $e$.  In fact to get a good grading we should scale the
grading by a factor of 2, as we have $e \in \g(1)$.  We refer also to \cite[Section 3]{GT} where good gradings
are considered in positive characteristic, and it is shown that the ``same classification''
of good gradings holds.
Since the grading in \eqref{e:goodgrading} is good we have that $\g^e \sub \bigoplus_{k \ge 0} \g(k)$, which
can also be seen directly from Lemma~\ref{L:centraliserbasis},
Now it follows from \cite[Theorem 1.4]{EK} that $\dim \g^e = \dim \g(0)$;
this can also be verified directly from the basis given in Lemma~\ref{L:centraliserbasis}.

We define the following subalgebras of $\g$
\begin{equation} \label{e:phandm}
\p :=  \bigoplus_{k \geq 0} \g(k), \quad \h :=  \g(0) \quad \text{and} \quad
\m := \bigoplus_{k < 0} \g(k).
\end{equation}
Then $\p$ is a parabolic subalgebra of $\g$, and $\h$ is the Levi factor of $\p$ containing $\t$.
Further, $\m$ is the nilradical of the opposite parabolic to $\p$.
We recall that the heights of the columns in $\pi$ are $q_1, q_2,...,q_l$, and we see that $\h$
is isomorphic to $\gl_{q_1}(\kk) \oplus \gl_{q_2}(\kk) \oplus \cdots \oplus \gl_{q_l}(\kk)$.
Also $\m$ is the Lie algebra of the closed subgroup
$M$ of $G$ generated by the root subgroups $u_{i,j}(\kk)$
with $\col(j) < \col(i)$.

We recall that $d_\chi$ denotes half the dimension of the coadjoint $G$-orbit
of $\chi$.  So we also have that $d_\chi$ is half the dimension of the adjoint $G$-orbit of $e$,
and thus we see that $d_\chi := \dim \m$, because $\dim \g^e = \dim \g(0)$.

\subsection{Tableaux and weights}

We require various weights in $\t^*$, which are used as shifts and to label
certain modules.  These weights can be encoded by fillings of $\pi$ as we explain below,
then we move on to give the weights we need.

A {\em $\pi$-tableau} is a diagram obtained by filling the boxes of $\pi$ with elements of $\kk$.
The set of all tableau of shape $\pi$ is denoted $\Tab_\kk(\pi)$, and we write
$\Tab_{\F_p}(\pi) \sub \Tab_\kk(\pi)$ for those tableaux with entries
in $\F_p$. For $A \in \Tab_\kk(\pi)$, we write $a_i$
for the entry in the $i$th box of $A$.
Two tableaux
are called {\em row-equivalent} if one can be obtained from the other
by permuting the entries in the rows.
A tableau $A \in \Tab_\kk(\pi)$ is {\em column-connected} if whenever the $j$th box of $\pi$ is directly below the $i$th box
we have $a_i = a_j + 1$.

For $A \in \Tab_\kk(\pi)$
we define a weight $\lambda_A \in \t^*$ by
\begin{equation*}
\lambda_A := \sum_{i=1}^N a_i \epsilon_i.
\end{equation*}

To understand the required weights it helps for us to give a decomposition of $\Phi$.  We
define
\begin{align*}
\Phi_+ &:= \{\epsilon_i - \epsilon_j \in \Phi \mid \row(i) < \row(j)\}, \\
\Phi_0 &:= \{\epsilon_i - \epsilon_j \in \Phi \mid \row(i) = \row(j)\} \text{ and } \\
\Phi_- &:= \{\epsilon_i - \epsilon_j \in \Phi \mid \row(i) > \row(j)\}.
\end{align*}
Also we define
\begin{align*}
\Phi(+) &:= \{\epsilon_i - \epsilon_j \in \Phi \mid \col(i) < \col(j)\}, \\
\Phi(0) &:= \{\epsilon_i - \epsilon_j \in \Phi \mid \col(i) = \col(j)\} \text{ and } \\
\Phi(-) &:= \{\epsilon_i - \epsilon_j \in \Phi \mid \col(i) > \col(j)\}.
\end{align*}
Then for $\eta,\xi \in \{-,0,+\}$, we define
$$
\Phi(\eta)_\xi = \Phi(\eta) \cap \Phi_\xi.
$$
We note that $\Phi(0)_0 = \varnothing$ and that $\Phi_+ \cup \Phi(+)_0$ is the system of positive
roots corresponding to $\b$.
Further, $\Phi(+) \cup  \Phi(0)_+$ is
the system of positive roots corresponding to a Borel subalgebra
contained in $\p$, and $\Phi(-) \cup \Phi(0)_+$ is another system of
positive roots.

Having set up this notation we are in a position to give
the weights that we require.
First we define
\begin{equation} \label{e:rho}
\rho := - \sum_{i=1}^N i \epsilon_i
\end{equation}
this is a shifted half sum of positive roots for $\b$, and is given by
$$
\rho =  \frac{1}{2} \left(\sum_{\alpha \in \Phi_+ \cup \Phi(+)_0} \alpha\right) - \delta,
$$
where
$$
\delta = \frac{N+1}{2} \sum_{i=1}^N \epsilon_i.
$$
We note that we should be careful in the above formulas when $p = 2$, though as the final
value of $\rho$ only involves integer coefficients this is not a problem.

We also require a ``choice of $\rho$'' corresponding to the system of
positive roots $\Phi(+) \cup  \Phi(0)_+$, and we define
\begin{equation} \label{e:barrho}
\bar \rho :=    \frac{1}{2} \left(\sum_{\alpha \in \Phi(+) \cup \Phi(0)_+} \alpha\right) - \delta
\end{equation}
More explicitly, we have
$$
\bar \rho = - \sum_{i=1}^N ((q_1+\dots+q_{\col(i)-1}) + \row(i) - (n-q_{\col(i)})) \epsilon_i.
$$
The weight
$$
\gamma := \sum_{\alpha \in \Phi(-)_+} \alpha,
$$
is important for Theorem~\ref{T:1dsimplespara}, because
\begin{eqnarray}
\label{e:rhoandgamma}
\rho = \bar \rho + \gamma.
\end{eqnarray}
We define
\begin{equation} \label{e:eta}
\eta := \sum_{i=1}^N (n - q_{\col(i)} - \dots - \cdots - q_l) \epsilon_i,
\end{equation}
and
$$
\rho_\h := - \sum_{i=1}^N \row(i) \epsilon_i,
$$
which is a shifted choice $\rho$ for the Borel subalgebra $\b \cap \h$ of $\h$.
Further, we define
$$
\beta := \sum_{i=1}^N (((q_1+\dots+q_{\col(i)-1})-(q_{\col(i)+1}+\dots+q_l)) \epsilon_i = \sum_{\alpha \in \Phi(-)} \alpha.
$$
and
$$
\widetilde \rho := \bar \rho + \beta.
$$
We note that $\widetilde \rho$ is a shifted choice of $\rho$
for the system of positive roots $\Phi(-) \cup \Phi(0)_+$.
An important identity for us is
\begin{equation} \label{e:tilderho}
\widetilde \rho = \bar \rho + \beta = \eta + \rho_\h = \sum_{\alpha \in \Phi(-) \cup \Phi(0)_+} \alpha.
\end{equation}

\subsection{Some modules for $U_\chi(\g)$} \label{ss:modules}

The weights introduced in the previous subsection are required to
define some modules for $\h$ and for $\g$.  In what follows it is helpful
to note that $\chi|_\p = 0$, so that we can view $U_0(\h) \sub U_0(\p) \sub U_\chi(\g)$.

We note that $\lambda \in \t^*$ is the weight of
1-dimensional $U(\h)$-module if and only if $\lambda(e_{i,i}) = \lambda(e_{j,j})$
whenever $\col(i) = \col(j)$, and also that $\widetilde \rho(e_{i,i}) = \widetilde \rho(e_{j,j}) - 1$,
when the $i$th box in $\pi$ is directly above the $j$th box.  Thus we deduce that, for
$A \in \Tab_\kk(\pi)$, we have $\lambda_A - \widetilde \rho$ is the weight
of a 1-dimensional $U(\h)$-module if and only if $A$ is column connected.
For column connected $A$ we denote this 1-dimensional $U(\h)$-module by $\widetilde \kk_A$.

Similarly, given $A \in \Tab_\kk(\pi)$, we have $\lambda_A - \bar \rho$ is the weight of a
1-dimensional $U(\h)$-module if and only if $A$ is column connected.  In this case
we denote the 1-dimensional $U(\h)$-module by $\bar \kk_A$, and note that it factors to a module
for $U_0(\h)$ if and only if $A \in \Tab_{\F_p}(\pi)$.  For $A \in \Tab_{\F_p}(\pi)$, we can inflate $\bar \kk_A$ to
a $U_0(\p)$-module and consider the induced module $N_\chi(A) := U_\chi(\g) \otimes_{U_0(\p)} \bar \kk_A$.
We have that $N_\chi(A) \iso U_\chi(\m)$ as a $U_\chi(\m)$-module, so that $\dim N_\chi(A)
= p^{\dim \m} = p^{d_\chi}$ and $N_\chi(A)$ is a minimal dimensional $U_\chi(\g)$-module.

Let $A \in \Tab_{\F_p}(\pi)$.  We define $\kk_A$ to be the 1-dimensional $U_0(\b)$-module
where $\t$ acts by $\lambda_A - \rho$, and the nilradical of $\b$ acts trivially.
The {\em baby Verma module} $Z_\chi(A)$ is defined to be $Z_\chi(A) := U_\chi(\g) \otimes_{U_0(\b)} \kk_A$.  Since
$\chi$ is in standard Levi form for the Levi subalgebra $\g_0$ with basis $\{e_{i,j} \mid \row(i) = \row(j)\}$,
$Z_\chi(A)$ has a simple head, which we denote by $L_\chi(A)$;
this essentially follows from the results in \cite[Section 3]{FPdef}, see also
\cite[Proposition 10.7]{JaLA}.  Moreover, we have that $L_\chi(A) \iso L_\chi(A')$ if
and only if $A$ is row equivalent to $A'$, see \cite[Corollary 3.5]{FPdef} or
\cite[Proposition 10.8]{JaLA}.  To see this we note that the shift by $\rho$
in our labelling of the simple modules, transforms the dot action of the
$W_0$ on $\t^*$ in \cite{FPdef} to the standard action, where $W_0$ is the Weyl
group of $\g_0$ with respect to $T$; and then this action
corresponds to permutations of entries in rows of tableau.
Given a $U_\chi(\g)$-module $M$ we say $v \in M$ is a
highest weight vector (for $\b$) of weight $\lambda \in \t^*$ if $\b v \sub \kk v$ and $tv = \lambda(t)v$ for
all $t \in \t$; so if $v \in M$ is a highest weight vector of weight $\lambda_A - \rho$, then there
is a homomorphism $Z_\chi(A) \to M$ sending $1 \otimes 1_A$ to $v$, where $1_A$ denotes the generator
of $\kk_A$.

The following theorem is key to this paper and
gives a compatibility between the modules $L_\chi(A)$ and $N_\chi(A)$.

\begin{Theorem} \label{T:1dsimplespara}
For column connected $A \in \Tab_{\F_p}(\pi)$ we have
$L_\chi(A) \iso N_\chi(A)$ and has dimension $p^{d_\chi}$.  In particular, for column connected
$A,A' \in \Tab_{\F_p}(\pi)$, we have $N_\chi(A) \iso N_\chi(A')$ if and only if
$A$ is row equivalent to $A'$.
\end{Theorem}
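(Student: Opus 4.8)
The plan is to exhibit a nonzero highest weight vector (for $\b$) of weight $\lambda_A-\rho$ in $N_\chi(A)$. Granting this, the remarks in \S\ref{ss:modules} give a nonzero homomorphism $Z_\chi(A)\to N_\chi(A)$; since $N_\chi(A)\iso U_\chi(\m)$ as a $U_\chi(\m)$-module it has dimension $p^{\dim\m}=p^{d_\chi}$, hence is irreducible, so this map is onto, and as $Z_\chi(A)$ has a unique maximal submodule we conclude $N_\chi(A)\iso L_\chi(A)$ of dimension $p^{d_\chi}$. The last assertion of the theorem is then immediate, since $L_\chi(A)\iso L_\chi(A')$ precisely when $A$ and $A'$ are row equivalent.

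For the vector I would use the subalgebra $\q:=\n\cap\m$ of $\g$, where $\n$ denotes the nilradical of $\b$; its root spaces are exactly the $\g_\alpha$ with $\alpha\in\Phi(-)_+$, and since $\row(i)<\row(j)$ whenever $e_{i,j}$ spans a root space of $\q$, no matrix unit $e_{j,i}$ occurs in $e$, so $\chi|_\q=0$. As $\q$ is moreover a restricted subalgebra, $U_\chi(\q)$ is the restricted enveloping algebra of the nilpotent Lie algebra $\q$, a local Frobenius algebra whose socle is the line spanned by the image $\bar m$ of $\prod_{\alpha\in\Phi(-)_+}e_\alpha^{p-1}$. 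I then set $v:=\bar m\cdot(1\otimes 1_A)$. Using $N_\chi(A)\iso U_\chi(\m)$ one sees $v\ne 0$, and its weight is $(\lambda_A-\bar\rho)+(p-1)\sum_{\alpha\in\Phi(-)_+}\alpha=(\lambda_A-\bar\rho)-\gamma=\lambda_A-\rho$, where the middle step uses that $\gamma$ has integer coordinates (so $(p-1)\gamma=-\gamma$ in $\t^*$) and the last is the identity $\rho=\bar\rho+\gamma$ of \eqref{e:rhoandgamma}.

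It remains to verify $\n v=0$. Decompose $\n$ into its root spaces, which fall into three groups: those for $\Phi(-)_+$ (spanning $\q$), for $\Phi(0)_+$ (spanning $\n\cap\h$), and for $\Phi(+)_+\cup\Phi(+)_0$ (spanning $\n$ intersected with the nilradical $\bigoplus_{k>0}\g(k)$ of $\p$). For $e_\beta\in\q$ the identity $e_\beta v=0$ is formal: $\bar m$ spans the socle of $U_\chi(\q)$, hence is killed by left multiplication by the augmentation ideal. For $e_\beta\in\n\cap\h$ one has $e_\beta\cdot(1\otimes 1_A)=0$ (a one dimensional $\h$-module is annihilated by $[\h,\h]$), so $e_\beta v=[e_\beta,\bar m]\cdot(1\otimes 1_A)$; here Lemma~\ref{L:centraliserbasis}(b) shows $\ad(e_\beta)$ preserves $\q$, so, since $x^p-x^{[p]}$ is central for $x\in\q$, it descends to a derivation of $U_\chi(\q)$ stabilising the socle, and comparing $\t$-weights (using $\beta\ne 0$) forces $[e_\beta,\bar m]=0$ in $U_\chi(\q)$.

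The delicate case is $e_\beta$ in the nilradical $\bigoplus_{k>0}\g(k)$ of $\p$, where $\ad(e_\beta)$ no longer preserves $\q$. Again $e_\beta v=[e_\beta,\bar m]\cdot(1\otimes 1_A)$, and I would expand $[e_\beta,\bar m]$ using the commutation relations \eqref{e:adjoint} for matrix units: each bracket arising strictly lowers the $\Z$-degree of the ``moving'' root vector. When that degree becomes non-negative the vector lies in $\n\cap\p$ and annihilates $1\otimes 1_A$; when it is negative the vector is $\pm e_\gamma$ with $\gamma\in\Phi(-)_+$, and merging it with the factor $e_\gamma^{p-1}$ already present produces $e_\gamma^p$, which is $0$ in $U_\chi(\m)$ as $\chi$ and the $p$-operation vanish on these matrix units. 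An induction on this (bounded below) $\Z$-degree then gives $e_\beta v=0$. Carrying out this last bookkeeping --- tracking the good-grading degrees and checking that every commutator term is absorbed in one of these two ways --- is the only substantial part of the argument.
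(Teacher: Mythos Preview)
Your strategy coincides with the paper's: the same vector $v=\bigl(\prod_{\alpha\in\Phi(-)_+}e_\alpha^{p-1}\bigr)\bar 1_A$, the same weight computation via $\rho=\bar\rho+\gamma$, and the same reduction to checking that positive root vectors kill $v$. The paper restricts to simple roots $e_{i,i+1}$ and, for the hard case $\row(i)=\row(i+1)$ (your $\Phi(+)$ case, where the $\Z$-degree is $1$), partitions $\Phi(-)_+$ into four subsets $I_1,\dots,I_4$ and proves three explicit claims by direct commutator bookkeeping, rather than attempting a uniform induction.

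Your $\Phi(+)$ sketch has a genuine gap. The dichotomy is correct: iteratively commuting the moving vector to the right, each resulting term has its moving vector either in $\n\cap\p$ (where it eventually reaches and kills $\bar 1_A$) or in $\q$. But your ``merging'' justification for the $\q$ case does not work as stated: to bring $e_\gamma$ next to the factor $e_\gamma^{p-1}$ you must commute it past intervening factors $e_{\alpha}^{p-1}$, and since $\q$ is not abelian this spawns new commutator terms that your induction does not control.

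The repair is to reuse the weight argument you already gave for $\Phi(0)_+$. Once the moving vector lands in $\q$, the entire term $(\text{left})\,e_\gamma\,(\text{right})$ is a product of root vectors from $\q$, hence lies in $U(\q)$; its $T$-weight is still $(p-1)\gamma+\beta$, unchanged by the commutations. For any root $\beta$ positive for $\b$ this weight cannot occur in $U_0(\q)$: writing $(p-1)\gamma+\beta=\sum_{\alpha\in\Phi(-)_+}c_\alpha\alpha$ with $0\le c_\alpha\le p-1$ would force $\beta=\sum(c_\alpha-p+1)\alpha$, a non-positive combination of positive roots, contradicting $\beta>0$. Since the $U(\q)$-action on $\bar 1_A$ factors through $U_0(\q)$ (as $\chi$ and the $p$-power both vanish on $\q$), the term kills $\bar 1_A$. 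With this fix your argument is complete and arguably cleaner than the paper's case analysis. Two small corrections: the citation of Lemma~\ref{L:centraliserbasis}(b) is misplaced (that lemma concerns $\g^e$); the fact that $\ad(e_\beta)$ preserves $\q$ for $\beta\in\Phi(0)_+$ is simply because $\h$ normalises $\m$ and $\n$ is closed under brackets. And the assertion that $\ad(e_\beta)$ descends to a derivation of $U_\chi(\q)$ is not obvious and is in any case unnecessary once you argue via $T$-weights as above.
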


\begin{proof}
From the discussion above, we know $N_\chi(A)$ has dimension $p^{d_\chi}$, so it
is a minimal module for $U_\chi(\g)$ and thus simple.
It follows that if we can find a highest weight vector $v \in N_\chi(A)$ for $\b$ of weight $\lambda_A - \rho$,
then $L_\chi(A) \iso N_\chi(A)$ as required.  This can be seen by noting that
the homomorphism $Z_\chi(A) \to N_\chi(A)$ will factor to give this isomorphism.
The claim regarding row equivalence was justified in the remarks
preceding the statement of the theorem.

We observe that the root vectors corresponding to roots in $\Phi(-)_+$ span a $p$-nilpotent subalgebra
$\mathfrak a$ of $\g$.  We let
$$
I = \{(i,j) \mid \epsilon_i - \epsilon_j \in \Phi(-)_+\} =\{ (i,j) \mid \col(i) > \col(j), \row(i) < \row(j)\} \subseteq \{1,...,N\}^2,
$$
so that $\a$ has basis $\{e_{i,j} \mid (i,j) \in I\}$.
Since all of the elements of this basis have nonzero $\t^e$ weight, we
see that the restriction of $\chi$ to $\a$ is zero.
Hence, the restricted enveloping algebra $U_0(\a)$ embeds in $U_\chi(\g)$, and
consequently
$e_{i,j}^p = 0$ in $U_0(\a) \sub U_\chi(\g)$ for $(i,j) \in I$.

There is an action of $T$ on $U_0(\a)$, and
\begin{eqnarray}
\label{e:defineu}
u = \prod_{(i,j) \in I} e_{i,j}^{p-1},
\end{eqnarray}
is in the unique weight space of maximal weight (with respect to the positive roots for $\b$).
Further, this weight space is 1-dimensional, which implies that the product in \eqref{e:defineu} can be
taken in any order (up to rescaling).

Let $\bar 1_A$ denote the generator of $\bar \kk_A$.
Observe that under the adjoint action $\t$ acts on $u$ with weight
$(p-1) \gamma = -\gamma = \rho - \bar\rho$ by
\eqref{e:rhoandgamma}.
Therefore, $v := u \otimes \bar 1_A$ is a weight vector for $\t$ with weight
$\lambda_A - \bar\rho + (\bar \rho - \rho) = \lambda_A - \rho$. In order to complete the proof we
must show $v$ is a highest weight vector for the action of $\b$, which
requires us to show that $e_{i,i+1} v = 0$ for $i=1,..., N-1$.

\smallskip

We first deal with the case where $\row(i) = \row(i+1)$ and we let $r := \row(i)$.
We begin by decomposing $I$ into four subsets:
\begin{align*}
I_1 &:= \{(j,k) \in I \mid \row(j) = r\};\\
I_2 &:= \{(j,k) \in I \mid \row(k) = r\};\\
I_3 &:= \{(j,k) \in I \mid \row(j) < r,  \row(k) > r\}; \text{ and} \\
I_4 &:= \{(j,k) \in I \mid (j,k) \notin I_1 \cup I_2 \cup I_3\}.
\end{align*}
We record three facts about commuting elements which are straightforward to verify directly. \\
{\bf Fact (i).} $e_{i,i+1}$ commutes with $e_{j,k}$ for $(j,k)\in I_3 \cup I_4$. \\
{\bf Fact (ii).} The elements $\{e_{j,k} \mid (j,k) \in I_1 \cup I_3\}$ pairwise commute. \\
{\bf Fact (iii).} The elements $\{e_{j,k} \mid (j,k) \in I_2 \cup I_3\}$ pairwise commute.

For $s = 1,2,3$, we see that $\{e_{j,k} \mid (j,k) \in I_s\}$ is the basis of an abelian subalgebra
of $\a$.  Therefore, the element $u_s := \prod_{(j,k) \in I_s} e_{j,k}^{p-1}$ does not
depend on the order of the product.
We choose an arbitrary ordering of $I_4$ and let $u_4 := \prod_{(j,k) \in I_4} e_{j,k}^{p-1}$.

We proceed with three claims, which we use to show that $e_{i,i+1} v = 0$.

\smallskip
\noindent
\textbf{Claim 1.} $(\ad(e_{i,i+1}) u_1) \otimes \bar 1_A = 0$.

\noindent Observe that $\ad(e_{i,i+1}) u_1$ is a sum of expressions of the form
\begin{equation}
\label{e:1dsimpleseqn1}
u_1^{(i+1, l)} := e_{i,l}(e_{i+1, l}^{p-2}) \prod e_{j,k}^{p-1}.
\end{equation}
where $(i+1, l) \in I_1$, and the product is taken over all $(j,k) \ne (i+1, l) \in I_1$.
Since all matrix units occurring in \eqref{e:1dsimpleseqn1} are of the form $e_{a,b}$ with $\row(a) = r$ and $\row(b) > r$
all of these factors commute so can be reordered.

We consider two cases to complete the proof of Claim 1.  The first case is when $(i,l) \in I_1$.
Then $u_1^{(i+1, l)}$ contains a factor of $e_{i,l}^p$, so that $u_1^{(i+1, l)} = 0$.
The second case is when $\col(i) = \col(l)$ and so $e_{i,l} \in [\h, \h]$.
In this case $e_{i, l} \bar 1_A = 0$ and so
$u_1^{(i+1, l)} \otimes \bar 1_A = 0$.

\smallskip
\noindent
\textbf{Claim 2.} \emph{$u_3 e_{j,k} u_1 \otimes \bar 1_A = 0$ whenever $\col(j) = \col(k)$
and $\row(j) < \row(k) = r$.}

\noindent We have $e_{j,k} \in [\h, \h]$ so $e_{j,k}\bar 1_A =0$.
Thus it suffices to show that $u_3 (\ad(e_{j,k}) u_1) = 0$.
Observe that $\ad(e_{j,k}) u_1$ is a sum of monomials of the form
\begin{eqnarray}
\label{e:1dsimpleseqn2}
e_{j,l} (e_{k,l}^{p-2}) \prod e_{k', l'}^{p-1}
\end{eqnarray}
where $(k,l) \in I_1$ and the product is taken over $(k',l') \ne (k,l) \in I_1$.
Similar to the comments following \eqref{e:1dsimpleseqn1} the matrix units occurring in
\eqref{e:1dsimpleseqn2} all commute and so can be reordered.
Since $\row(j) < r$ and $\row(l) > \row(k) = r$ we have $e_{j,l} \in I_3$.
Applying Fact (iii) above we see that
$u_3 e_{j,l} e_{k,l}^{p-2} \prod e_{k', l'}^{p-1}$ contains a factor of $e_{j,l}^p$, hence is
equal to 0.  This proves Claim 2.

\smallskip
\noindent
\textbf{Claim 3.} $u_3 (\ad(e_{i, i+1}) u_2) u_1 \otimes \bar 1_A = 0$.\\
\noindent Observe that $\ad(e_{i,i+1}) u_2$ is a sum of expressions of the form
\begin{eqnarray}
\label{e:1dsimpleseqn3}
u_2^{(l, i)} := -e_{l,i+1}(e_{l, i}^{p-2}) \prod e_{j',k}^{p-1}.
\end{eqnarray}
where $(l,i) \in I_2$ and the product is taken over all $(j,k) \in I_2$ with $(j, k) \neq (l,i)$.  The
matrix units occurring here all commute, so can be
reordered.
We consider two cases.  The first case is when
$(l,i+1) \in I_2$. Then $u_2^{(l,i)}$ contains a factor of $e_{l,i+1}^p$ and $u_2^{(l,i)} = 0$.
The second case is when $\col(l) = \col(i+1)$.
Then we can use Claim 2 along with Fact (iii) to
show that $u_3 u_2^{(l, i)} u_1 \otimes \bar 1_A = 0$.
This completes the proof of Claim 3.

\smallskip

We now combine these claims to prove that $e_{i,i+1} v = 0$.
Since $e_{i,i+1}$ lies in the nilradical of $\p$ we have that $e_{i,i+1} \bar 1_A = 0$.
Thus it suffices to prove
$\ad(e_{i,i+1})(u_4 u_3 u_2 u_1) \otimes \bar 1_A = 0$. Applying Fact (i) we only need to check
$u_4 u_3 (\ad(e_{i,i+1})u_2) u_1 \otimes \bar 1_A = 0$ and $u_4 u_3 u_2 (\ad(e_{i,i+1})u_1) \otimes \bar 1_A = 0$,
which are given by Claim 3 and Claim 1 respectively.

\smallskip

We move on to deal with the case $\row(i) < \row(i+1)$, and show that $e_{i,i+1}v = 0$.

For this case first suppose that $\col(i) > \col(i+1)$.  Then we have $e_{i,i+1} \in \a$.
By the remarks following \eqref{e:defineu} we can write
$u = e_{i, i+1}^{p-1} u_0$ for some $u_0 \in U_0(\a)$ and so $e_{i, i+1}u = 0$, which implies
that $e_{i,i+1}v = 0$.

The case where $\col(i) = \col(i+1)$, which only happens when $p_{\row(i)} = 1$ and $s_{i+1,i} = 0$.
Then we have $e_{i, i+1} \in [\h, \h]$ and $e_{i,i+1} \bar 1_A = 0$, so we
are just required to show that $[e_{i,i+1}, u] = 0$.  This is done with commutator arguments
similar to those used above, so we omit the details.
\end{proof}

\subsection{The $W$-algebra $U(\g,e)$ and its $p$-centre} \label{ss:walg}
Since $e \in \g(1)$, we have that $\chi$ vanishes on $\g(k)$ for $k \ne -1$.
Therefore, $\chi$ restricts to a character of $\m$.
We define $\m_\chi := \{ x - \chi(x) \mid x\in \m\} \subseteq U(\g)$, which is a
Lie subalgebra of $U(\g)$.
By the PBW theorem there is a direct sum decomposition
$$
U(\g) = U(\g) \m_\chi \oplus U(\p)
$$
We let $\pr : U(\g) \to U(\p)$ be the projection onto the second factor.  Also
we abbreviate and write $I := U(\g)\m_\chi$, and define $Q:= U(\g)/I$.

As explained in \cite[\S4.3]{GT} the adjoint action of $M$ on $U(\g)$
gives an adjoint action of $M$ on $Q$.  In \cite[Definition 4.3]{GT} the
$W$-algebra associated to $e$ is defined to be
\begin{equation*}
\{u + I \in Q \mid g \cdot u + I = u + I \text{ for all } g \in M\}.
\end{equation*}
In this paper, we prefer to work with an equivalent realization of $U(\g,e)$
as a subalgebra of $U(\p)$.  For this we require the {\em twisted adjoint action}
of $M$ on $U(\p)$, which is defined by
\begin{equation*} \label{e:twistM}
\tw(g) \cdot u := \pr (g \cdot u),
\end{equation*}
for $g \in M$ and $u \in U(\p)$.
By using $\pr$ to identify $U(\g)/I$ with $U(\p)$, we can equivalently define the
$W$-algebra associated to $e$ to be the invariant subalgebra
\begin{equation*}
U(\g,e) := U(\p)^{\tw(M)} = \{ u \in U(\p) \mid \tw(g) \cdot u = u \text{ for all } g \in M\}.
\end{equation*}

We want to recast some of the material from \cite[Section 8]{GT} in
our setting where $U(\g,e) = U(\p)^{\tw(M)}$.  We begin with the $p$-centre
of $U(\g,e)$, and to define this we note that the $p$-centre $Z_p(\p)$ of $U(\p)$ is stable under the
twisted adjoint action of $M$ of $U(\p)$.
The {\em $p$-centre of $U(\g,e)$} is defined in \cite[Definition 8.1]{GT}, and in our
setting, it is given by
$$
Z_p(\g,e) := Z_p(\p)^{\tw(M)} \subseteq U(\g,e).
$$

Let $\psi \in \p^* \sub \g^*$.
We write $J_\psi^\p$ for the ideal of $U(\p)$ generated
$\{x^p - x^{[p]} - \psi(x)^p \mid x \in \p\}$,
the {\em reduced $W$-algebra corresponding to $\psi$} as
\begin{equation*}
U_\psi(\g,e) := U(\g,e) / (J_\psi^\p \cap U(\g,e)).
\end{equation*}
We note that our notation here differs from that used in \cite[Definition 8.5]{GT} by a shift
of $\chi$, i.e.\ $U_\psi(\g,e)$ here would be denoted $U_{\chi+\psi}(\g,e)$ there
(to make sense of $\chi+\psi \in \g^*$ we identify $\p^* = \Ann_{\g^*}(\m) \sub \g^*$).
This change in notation is partly justified by the fact that
the kernel of the restriction of the projection $U(\p) \onto U_\psi(\p)$
to $U(\g,e)$ is $J_\psi \cap U(\g,e)$.  Consequently, we can identify
$U_\psi(\g,e)$ with the image of $U(\g,e)$ in $U_\psi(\p)$.

It turns out that for $\psi \ne \psi'$
we can have $J_\psi^\p \cap U(\g,e) = J_{\psi'}^\p \cap U(\g,e)$,
so that $U_\psi(\g,e) = U_{\psi'}(\g,e)$.
To explain precisely when this happens we need to translate some of the material from
\cite[\S8.2]{GT} to our setting.  We write $\m^\perp \sub \g$ for the annihilator
of $\m$ with respect to $(\cdot\,,\cdot)$, and note that we can identify
$\p^* \iso e+\m^\perp$ via $(\cdot\,,\cdot)$.  There is an adjoint action of
$M$ on $e+\m^\perp$, and this translates through the identification
$\p^* \iso e+\m^\perp$ to an
action of $M$ on $\p^*$, which we refer to as the {\em twisted action of $M$ on $\p^*$}.
For $\phi \in \p^*$, $g \in M$ and $x \in \p$ this twisted adjoint action is given
by $(\tw(g) \cdot \phi)(x) = \chi(g^{-1} \cdot x -x) + \phi(g^{-1} \cdot x)$.

Now we state the required part of \cite[Lemma 8.6]{GT} in our notation.

\begin{Lemma} \label{L:samered}
We have that $U_\psi(\g,e) = U_{\psi'}(\g,e)$ if and only if $\psi$
and $\psi'$ are conjugate under the twisted $M$-action on $\p^*$.
\end{Lemma}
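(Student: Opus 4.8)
The plan is to obtain this from \cite[Lemma 8.6]{GT}, once the two set\-ups are matched up; most of the needed dictionary has in fact already been assembled in \S\ref{ss:walg}. Recall that $\pr$ identifies $U(\g)/I$ with $U(\p)$, carrying the $M$-invariants of $Q = U(\g)/I$ (the $W$-algebra of \cite[Definition 4.3]{GT}) onto $U(\p)^{\tw(M)} = U(\g,e)$, and the $p$-centre of \cite[Definition 8.1]{GT} onto $Z_p(\g,e) = Z_p(\p)^{\tw(M)}$. Moreover the reduced $W$-algebra $U_\psi(\g,e)$ as defined here equals the algebra written $U_{\chi+\psi}(\g,e)$ in \cite[Definition 8.5]{GT} (with $\chi+\psi\in\g^*$ formed using $\p^* = \Ann_{\g^*}(\m)$), and, under $\p^*\iso e+\m^\perp$, the twisted $M$-action on $\p^*$ corresponds to the adjoint action of $M$ on $e+\m^\perp$, hence via $\psi\leftrightarrow\chi+\psi$ to the $M$-action on $\chi+\m^\perp$ of \cite[\S8.2]{GT}. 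Granting these identifications the statement is precisely \cite[Lemma 8.6]{GT}, so I would simply invoke it; but it is worth recording why it holds.

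Set $\mathfrak z := Z_p(\g,e)$. The crux is the identity $J_\psi^\p \cap U(\g,e) = U(\g,e)(J_\psi^\p \cap \mathfrak z)$. Since $J_\psi^\p$ is generated in $U(\p)$ by the $p$-central elements $x^p - x^{[p]} - \psi(x)^p$ for $x\in\p$, this reduces to a comparison of PBW-type bases, using that $U(\p)$ is free over $Z_p(\p)$, that $Z_p(\p)$ is free over $\mathfrak z = Z_p(\p)^{\tw(M)}$, and that $U(\g,e)$ is free over $\mathfrak z$ --- all part of the structure theory of \cite[Section 8]{GT}. Given this, $U_\psi(\g,e) = U_{\psi'}(\g,e)$ if and only if $J_\psi^\p\cap\mathfrak z = J_{\psi'}^\p\cap\mathfrak z$ as ideals of $\mathfrak z$. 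Under $Z_p(\p)\iso\kk[(\p^*)^{(1)}]$ the ideal $J_\psi^\p\cap Z_p(\p)$ is the maximal ideal at the point corresponding to $\psi$, and $\mathfrak z\iso\kk[(\p^*)^{(1)}]^M$, so $J_\psi^\p\cap\mathfrak z$ is the ideal of $M$-invariant regular functions vanishing there. By the Kostant--Rosenlicht theorem the $M$-orbits on the affine variety $(\p^*)^{(1)}$ are closed, and by the description of $\mathfrak z$ in \cite{GT} they are exactly the fibres of $(\p^*)^{(1)}\to\Spec\mathfrak z$; hence $J_\psi^\p\cap\mathfrak z = J_{\psi'}^\p\cap\mathfrak z$ precisely when $\psi$ and $\psi'$ lie in one orbit of the twisted $M$-action on $\p^*$, which is the assertion.

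The step I expect to cause the most trouble is the displayed identity --- that $p$-central reduction commutes with passage to $\tw(M)$-invariants, i.e.\ that $J_\psi^\p\cap U(\g,e)$ is actually generated by elements of $\mathfrak z$, rather than merely containing $U(\g,e)(J_\psi^\p\cap\mathfrak z)$. This is exactly the point at which one needs a PBW basis of $U(\g,e)$ compatible with its $p$-centre, as supplied by the reduction-mod-$p$ constructions of \cite{GT}; once that is in hand the remainder of the argument is formal.
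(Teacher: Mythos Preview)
Your proposal is correct and matches the paper's approach exactly: the paper does not prove this lemma at all but simply states it as ``the required part of \cite[Lemma 8.6]{GT} in our notation,'' which is precisely what your first paragraph does after assembling the dictionary. Your additional sketch of why the cited result holds (via the identification $Z_p(\g,e)\iso\kk[(\p^*)^{(1)}]^M$ and closedness of $M$-orbits) goes beyond what the paper provides, but is consistent with the argument in \cite{GT}.
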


Thanks to Quillen's lemma, an irreducible $U(\g,e)$-module $L$ factors to
a module for $U_\psi(\g,e)$ for some $\psi \in \p^*$.  Further, it is
clear from the definitions that, for $\psi,\psi' \in \p^*$,
the module $L$ factors to a module for both $U_\psi(\g,e)$ and for $U_{\psi'}(\g,e)$
if and only if $J_\psi^\p \cap U(\g,e) = J_{\psi'}^\p \cap U(\g,e)$, which by
the previous lemma occurs if only $\psi$
and $\psi'$ are conjugate under the twisted $M$-action.

We also recall Premet's equivalence in Theorem~\ref{T:premequiv} below.
This theorem is based on \cite[Theorem 2.4]{PrST}, and the statement
here can be deduced from \cite[Lemma 2.2(c)]{PrCQ} and \cite[Proposition 8.7 and Lemma 8.8]{GT}, see also
\cite[Remark 9.4]{GT}. For the statement, we view $\psi \in \p^*$ as an element of $\g^*$
via the identification $\p^* = \Ann_{\g^*}(\m) \sub \g^*$.
Also we define $Q^\psi = Q/J_{\chi+\psi}Q$, and recall that as explained in
\cite[\S8.3]{GT} $Q^\psi$ is a left $U_{\chi+\psi}(\g)$-module and a right $U_\psi(\g,e)$-module

\begin{Theorem} \label{T:premequiv}
Let $\psi \in \p^*$.  We have
\begin{itemize}
\item[(a)]
$
U_{\chi+\psi}(\g) \iso \Mat_{p^{d_\chi}} U_\psi(\g,e);
$
\item[(b)] the functor from $U_{\psi}(\g,e)\lmod$ to $U_{\chi+\psi}(\g)\lmod$
given by
\begin{equation} \label{e:premequiv1}
M \mapsto Q^\psi \otimes_{U_\psi(\g,e)} M
\end{equation}
is an equivalence of categories with quasi-inverse given
by
\begin{equation} \label{e:premequiv2}
V \mapsto V^{\m_\chi} := \{v \in V \mid \m_\chi v = 0\}.
\end{equation}
\item[(c)]
$
\dim (Q^\psi \otimes_{U_\psi(\g,e)} M) = p^{d_\chi} \dim M,
$
for a finite dimensional $U_\psi(\g,e)$-module $M$.
\end{itemize}
\end{Theorem}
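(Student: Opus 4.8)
The plan is to deduce Theorem~\ref{T:premequiv} from results already in the literature --- \cite[Proposition 8.7]{GT} and \cite[Lemma 8.8]{GT}, which themselves rest on Premet's \cite[Theorem 2.4]{PrST} and \cite[Lemma 2.2(c)]{PrCQ} --- the only genuine work being to reconcile the realization of $U(\g,e)$ as a subalgebra of $U(\p)$ used here with the $Q$-model of \cite{GT}. For this, I would first note that the PBW decomposition $U(\g) = U(\g)\m_\chi \oplus U(\p)$ makes $\pr$ a $\kk$-linear isomorphism $Q = U(\g)/I \isoto U(\p)$ --- this is exactly the identification set up in \S\ref{ss:walg} --- under which the invariant subalgebra $\{u+I \in Q \mid g\cdot u + I = u+I \text{ for all } g\in M\}$ of \cite[Definition 4.3]{GT} goes to $U(\p)^{\tw(M)} = U(\g,e)$, the submodule $J_{\chi+\psi}Q$ goes to $J_\psi^\p$, hence $Q^\psi$ goes to $U_\psi(\p)$, and the algebra denoted $U_\psi(\g,e)$ here goes to the one written $U_{\chi+\psi}(\g,e)$ in \cite[Definition 8.5]{GT}. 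It therefore suffices to prove the three statements in the $Q$-model.

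In the $Q$-model, writing $Q = U(\g)\otimes_{U(\m)}\kk_\chi$ and using $\psi|_\m = 0$, one has $Q^\psi \iso U_{\chi+\psi}(\g)\otimes_{U_\chi(\m)}\kk_\chi$, which has dimension $p^{\dim\p}$ since $U_{\chi+\psi}(\g)$ is free over $U_\chi(\m)$. Frobenius reciprocity gives a natural isomorphism $\operatorname{Hom}_{U_{\chi+\psi}(\g)}(Q^\psi, V) \iso V^{\m_\chi}$, so the functor \eqref{e:premequiv2} is $\operatorname{Hom}_{U_{\chi+\psi}(\g)}(Q^\psi, -)$ and \eqref{e:premequiv1} is its left adjoint $Q^\psi \otimes_{U_\psi(\g,e)} -$, with $\operatorname{End}_{U_{\chi+\psi}(\g)}(Q^\psi)^{\mathrm{op}} = U_\psi(\g,e)$. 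The decisive input, supplied by \cite[Theorem 2.4]{PrST}, \cite[Lemma 2.2(c)]{PrCQ} and carried to characteristic $p$ in \cite[Proposition 8.7]{GT}, is that $Q^\psi$ is a projective $U_{\chi+\psi}(\g)$-module. Granting this, the symmetry of the Frobenius algebra $U_{\chi+\psi}(\g)$ together with a dimension count forces $U_{\chi+\psi}(\g) \iso (Q^\psi)^{\oplus p^{d_\chi}}$ as a left module, so $Q^\psi$ is a projective generator; Morita theory then yields (a), namely $U_{\chi+\psi}(\g) \iso \Mat_{p^{d_\chi}}(U_\psi(\g,e))$, and the equivalence (b) with the stated quasi-inverse \eqref{e:premequiv2}. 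For (c) one notes that under this equivalence $Q^\psi$ corresponds to the bimodule of column vectors, hence is free of rank $p^{d_\chi}$ as a right $U_\psi(\g,e)$-module, so $\dim(Q^\psi\otimes_{U_\psi(\g,e)}M) = p^{d_\chi}\dim M$ (recall $d_\chi = \dim\m$).

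The only hard ingredient is the projectivity of $Q^\psi$ over $U_{\chi+\psi}(\g)$, which in characteristic $0$ is due to Premet and passes to characteristic $p$ by reduction modulo $p$; I would not reprove it but simply cite \cite[Proposition 8.7]{GT}. Everything else is formal --- Morita theory, Frobenius reciprocity, and dimension bookkeeping --- or is the routine check that the $\pr$-model matches the $Q$-model. So the proof I envisage is short: transport notation via \S\ref{ss:walg}, invoke \cite[Proposition 8.7, Lemma 8.8]{GT}, \cite[Theorem 2.4]{PrST} and \cite[Lemma 2.2(c)]{PrCQ} for the three assertions in the $Q$-model, and identify \eqref{e:premequiv2} with the $\m_\chi$-invariants functor under $Q^\psi \leftrightarrow U_\psi(\p)$.
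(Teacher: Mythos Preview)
Your proposal is correct and matches the paper's treatment: the paper does not give an independent proof of Theorem~\ref{T:premequiv} but simply records it as a consequence of \cite[Theorem 2.4]{PrST}, \cite[Lemma 2.2(c)]{PrCQ}, and \cite[Proposition 8.7, Lemma 8.8]{GT} (with a pointer to \cite[Remark 9.4]{GT}), which are exactly the references you invoke. Your additional exposition --- the notational reconciliation via $\pr$, the identification of \eqref{e:premequiv2} with $\operatorname{Hom}_{U_{\chi+\psi}(\g)}(Q^\psi,-)$, and the Morita argument from projectivity of $Q^\psi$ --- is a faithful unpacking of what those citations contain rather than a different route.
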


We also recall that $U(\g,e)$ has a PBW basis, which is described in \cite[Theorem 7.3]{GT}.
We summarize the properties that we require in Proposition~\ref{P:PBW} below and adapt
the statement to the case $\g = \gl_N(\kk)$.  For this we first
have to give some notation.
We fix a basis
$x_1,...,x_r$ of $\g^e$, chosen so that $x_i \in \g(n_i)$, where $n_i \in \Z_{\ge 0}$.
Let $I_\p = \{(i,j) \mid 1 \le i,j, \le N, e_{i,j} \in \p\}$ and fix an order on $I_\p$.
For $\ba = (a_{i,j}) \in \Z_{\geq 0}^{I_\p}$  we write
\begin{equation} \label{e:e^a}
\be^\ba := \prod_{(i,j) \in I_\p} e_{i,j}^{a_{i,j}} \in U(\p),
\end{equation}
and define $|\ba| = \sum_{(i,j) \in I_\p} a_{i,j}$ and $|\ba|_e = \sum_{(i,j) \in I_\p} (\col(j)-\col(i)+1)a_{i,j}$.

We can now state our proposition about the PBW basis of $U(\g,e)$; it is a consequence
of \cite[Lemma 7.1 and Theorem 7.3]{GT}. We remind the reader that the graded degrees in this paper
differ from those in {\it loc. cit.} by a factor of 2.

\begin{Proposition} \label{P:PBW}
$ $
\begin{itemize}
\item[(a)] There are elements $\Theta(x_1),\dots,\Theta(x_r)$ of $U(\g,e)$ of the form
\begin{equation}\label{e:Theta}
\Theta(x_i)
=  x_i + \sum_{|\ba|_e \le n_i + 1} \lambda_{\ba,i} \be^\ba
\end{equation}
where $\lambda_{\ba,i} \in \kk$ satisfy $\lambda_{\ba,i} = 0$ whenever $|\ba|_e = n_i+1$ and $|\ba| = 1$.
\item[(b)] Given any elements $\Theta(x_1),\dots,\Theta(x_r) \in U(\g,e)$ of the form in \eqref{e:Theta} the ordered monomials
in $\Theta(x_1),\dots,\Theta(x_r)$ form a basis of $U(\g,e)$.
\end{itemize}
\end{Proposition}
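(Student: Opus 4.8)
The plan is to deduce both statements from the PBW theorem for $U(\g,e)$ established in \cite[Lemma 7.1 and Theorem 7.3]{GT}, after specialising to $\g=\gl_N(\kk)$ and halving the grading (this is the factor-of-$2$ discrepancy flagged above). First I would put the \emph{Kazhdan filtration} on $U(\p)$, in which $e_{i,j}$ has degree $\col(j)-\col(i)+1$, so that $|\ba|_e$ is precisely the Kazhdan degree of the monomial $\be^\ba$ and $\gr U(\p)\iso S(\p)$ as graded algebras. The twisted $M$-action preserves this filtration, so $U(\g,e)=U(\p)^{\tw(M)}$ inherits one; by \cite[Theorem 7.3]{GT} there is then a graded isomorphism $\gr U(\g,e)\iso S(\g^e)$, where $S(\g^e)$ carries the Kazhdan grading with $x_i\in\g^e\cap\g(n_i)$ in degree $n_i+1$, and the resulting inclusion $\gr U(\g,e)\hookrightarrow\gr U(\p)=S(\p)$ is the one induced by $\g^e\sub\p$.

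For part (a) I would simply take the elements $\Theta(x_i)$ produced by the construction in \cite[Lemma 7.1]{GT}: these lie in $U(\p)^{\tw(M)}=U(\g,e)$ and, after rescaling degrees, have exactly the shape \eqref{e:Theta}, the vanishing $\lambda_{\ba,i}=0$ for $|\ba|_e=n_i+1,\ |\ba|=1$ reflecting the fact that the construction can be arranged to leave the top Kazhdan-degree linear part of $\Theta(x_i)$ equal to $x_i$ itself.

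For part (b) I would run the standard filtered--graded argument. Given any $\Theta(x_1),\dots,\Theta(x_r)\in U(\g,e)$ of the form \eqref{e:Theta}, the terms $\be^\ba$ with $|\ba|_e<n_i+1$ have strictly smaller Kazhdan degree than $x_i$, while those with $|\ba|_e=n_i+1$ have $|\ba|\ge2$; hence the Kazhdan symbol of $\Theta(x_i)$, computed in $\gr_{n_i+1}U(\g,e)\sub S(\p)$, has $S$-degree-$1$ part equal to $x_i$. Since this symbol in fact lies in $S(\g^e)$ and the embedding $S(\g^e)\hookrightarrow S(\p)$ preserves $S$-degree, it equals $x_i$ modulo decomposable elements of $S(\g^e)$. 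Inducting on the Kazhdan degree and comparing with the Hilbert series of $S(\g^e)$, it follows that the symbols $\{\mathrm{symb}(\Theta(x_i))\}$ form a free polynomial generating set of $\gr U(\g,e)$, so the ordered monomials in them are a homogeneous basis of $\gr U(\g,e)$; lifting, the ordered monomials in $\Theta(x_1),\dots,\Theta(x_r)$ are a basis of $U(\g,e)$.

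The hard part is really the input \cite[Theorem 7.3]{GT}, in particular the precise identification of $\gr U(\g,e)$ with the polynomial subalgebra $S(\g^e)\sub S(\p)$ with $\g^e$ embedded identically; once that is in hand, checking that the correction terms $\be^\ba$ with $|\ba|_e=n_i+1$, $|\ba|\ge2$ cannot disturb the leading symbol in $\gr U(\g,e)$, together with the rest of the degree bookkeeping for the rescaled Kazhdan filtration, is routine.
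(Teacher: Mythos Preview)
Your proposal is correct and follows exactly the approach the paper indicates: the paper does not give a proof of this proposition at all, but simply states that it ``is a consequence of \cite[Lemma 7.1 and Theorem 7.3]{GT}'' (noting the factor-of-$2$ rescaling of the grading), and you have supplied the details of that deduction. Your filtered--graded argument for part (b), showing that the Kazhdan symbol of any $\Theta(x_i)$ of the stated form lands in $S(\g^e)\subset S(\p)$ with linear part $x_i$ and hence that the symbols are polynomial generators, is the standard way to extract (b) from the identification $\gr U(\g,e)\cong S(\g^e)$.
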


Let $\Theta(x_i),\Theta(x_j)$ be elements of $U(\g,e)$ of the form \eqref{e:Theta}.
Then a commutator calculation shows that
\begin{equation} \label{e:commTheta}
[\Theta(x_i),\Theta(x_j)] = [x_i,x_j] + \sum_{|\ba|_e \le n_i + n_j + 1} \mu_\ba \be^\ba,
\end{equation}
where $\mu_\ba \in \kk$ satisfy $\mu_\ba = 0$ whenever $|\ba|_e = n_i+n_j+1$ and $|\ba| = 1$.
The key ingredient for this calculation is to observe that if we take the
commutator $[\be^\ba,\be^\bb]$ for $\ba, \bb \in \Z_{\ge 0}^{I_\p}$,
then we get a linear combinations of terms $\be^\bc$ with $|\bc|_e = |\ba|_e+|\bb|_e-1$ and
$|\bc| = |\ba|+|\bb|-1$, plus a linear combination of terms $\be^\bd$ with $|\bd|_e < |\ba|_e+|\bb|_e-1$.

\section{Modular truncated shifted Yangian}

In this section we consider the modular shifted Yangian $Y_n(\sigma)$ and its truncation $Y_{n,l}(\sigma)$.
The algebras $Y_n(\sigma)$ have been studied in recent work of Brundan and the
second author, \cite{BT}.  Here we recall some of the results in {\em loc.\ cit.\ }and
move on to verify that the truncation $Y_{n,l}(\sigma)$ has structure theory similar to that
in characteristic 0.
In the next section we exploit formulas from \cite[Section 9]{BKshift}
to show that the modular finite $W$-algebra $U(\g,e)$ is isomorphic to the shifted truncated Yangian $Y_{n,l}(\sigma)$ of level $l = p_n$.

We recall that $\sigma$ is the shift matrix for the pyramid $\pi$.
The {\em modular shifted Yangian} $Y_n(\sigma)$ is the $\kk$-algebra with generators
\begin{equation}\label{e:Ygens}
\begin{array}{c} \{D_i^{(r)} \mid 1\leq i \leq n, r>0\} \cup \{E_i^{(r)} \mid 1\leq i < n, r> s_{i,i+1} \} \\ \cup \, \{F_i^{(r)} \mid 1\leq i < n, r> s_{i+1,i}\} \end{array}
\end{equation}
and relations
\begin{align}
\big[D_i^{(r)}, D_j^{(s)}\big] &=  0,\label{e:r2}\\
\big[E_i^{(r)},F_j^{(s)}\big] &= -\delta_{i,j}
\sum_{t=0}^{r+s-1} D_{i+1}^{(r+s-1-t)}\widetilde D_{i}^{(t)},\label{e:r3}\\
\big[D_i^{(r)}, E_j^{(s)}\big] &= (\delta_{i,j}-\delta_{i,j+1})
\sum_{t=0}^{r-1} D_i^{(t)} E_j^{(r+s-1-t)},\label{e:r4}\\
\big[D_i^{(r)}, F_j^{(s)}\big] &= (\delta_{i,j+1}-\delta_{i,j})
\sum_{t=0}^{r-1} F_j^{(r+s-1-t)}D_i^{(t)} ,\label{e:r5}\\
\big[E_i^{(r)}, E_i^{(s)}\big] &=
\sum_{t=r}^{s-1} E_i^{(t)} E_i^{(r+s-1-t)}
\hspace{11mm}\text{if $r < s$},\label{e:r6}\\
\big[F_i^{(r)}, F_i^{(s)}\big] &=
\sum_{t=s}^{r-1}
F_i^{(r+s-1-t)} F_i^{(t)}\hspace{11mm}\text{if $r > s$},\label{e:r7}\\
\big[E_i^{(r+1)}, E_{i+1}^{(s)}\big]&-
 \big[E_i^{(r)}, E_{i+1}^{(s+1)}\big]=
E_i^{(r)} E_{i+1}^{(s)},\label{e:r8}\\
\big[F_i^{(r)}, F_{i+1}^{(s+1)}\big]&- \big[F_i^{(r+1)}, F_{i+1}^{(s)}\big] =
 F_{i+1}^{(s)} F_i^{(r)},\label{e:r9}\\
\big[E_i^{(r)}, E_j^{(s)}\big] &= 0 \hspace{37.9mm}\text{ if }|i-j|> 1,\label{e:r10}\\
\big[F_i^{(r)}, F_j^{(s)}\big] &= 0 \hspace{37.9mm}\text{ if }|i-j|>
1,\label{e:r11}
 \end{align}\begin{align}
\Big[E_i^{(r)}, \big[E_i^{(s)}, E_j^{(t)}\big]\Big] &+
\Big[E_i^{(s)}, \big[E_i^{(r)}, E_j^{(t)}\big]\Big] = 0 \quad\text{ if }|i-j|=1, r
\neq s,\label{e:r12}\\
\Big[F_i^{(r)}, \big[F_i^{(s)}, F_j^{(t)}\big]\Big] &+
\Big[F_i^{(s)}, \big[F_i^{(r)}, F_j^{(t)}\big]\Big] = 0 \quad\text{ if
}|i-j|=1,r \neq s\label{e:r13}\\
\Big[E_i^{(r)}, \big[E_i^{(r)}, E_j^{(t)}\big]\Big] &= 0 \hspace{39.2mm}\text{ if } |i - j| = 1, \label{e:rel15}\\
\Big[F_i^{(r)}, \big[F_i^{(r)}, F_j^{(t)}\big]\Big] &= 0 \hspace{39.2mm}\text{ if } |i - j| = 1,\label{e:rel16}
\end{align}
for all admissible $i,j,r,s,t$.
In the relations, the shorthand
$D_i^{(0)} = \widetilde{D}_i^{(0)} := 1$ is used,
and the elements $\widetilde{D}_i^{(r)}$ for $r > 0$
are defined recursively by
$\widetilde{D}_i^{(r)} := -\sum_{t=1}^r D_i^{(t)} \widetilde D_i^{(r-t)}$.

This presentation of $Y_n(\sigma)$ is given in \cite[Theorem~4.15]{BT} and is
modelled on the Drinfeld presentation
of the shifted Yangian defined over $\C$, as introduced in \cite[Section 2]{BKshift}.
It is proved in \cite[Theorem~4.14]{BT} that
there is a PBW basis for $Y_n(\sigma)$, whose description does not depend on the
characteristic $p$.  Before stating this result
it is necessary to introduce some additional elements.
We define
\begin{align*}
E_{i,i+1}^{(r)} &:= E_{i}^{(r)}, \\
F_{i,i+1}^{(r)} &:= F_i^{(r)}
\end{align*}
for $ i =1,\dots,n-1$,
and inductively define
\begin{align} \label{e:Eij}
E_{i,j}^{(r)} := [E_{i, j-1}^{(r - s_{j-1,j})}, E_{j-1}^{(s_{j-1, j} + 1)}] & \text{ for } 1 \leq i < j \leq n \text{ and } r > s_{i,j}, \\
\smallskip \label{e:Fij}
F_{i,j}^{(r)} := [F_{j-1}^{(s_{j,j-1} + 1)}, F_{i,j-1}^{(r-s_{j, j-1})}] & \text{ for } 1 \leq i < j \leq n \text{ and } r > s_{j,i}.
\end{align}
Then \cite[Theorem~4.14]{BT} says that monomials in the elements
\begin{equation} \label{e:pbwgens}
\begin{array}{c}
\{D_i^{(r)} \mid 1 \le i \le n, r > 0 \} \cup \{E_{i,j}^{(r)} \mid 1 \le i < j \le n, r > s_{i,j} \} \\
\cup \, \{F_{i,j}^{(r)} \mid 1 \le i < j \le n, r > s_{j,i}\}
\end{array}
\end{equation}
in any fixed order give a basis of $Y_n(\sigma)$.

The shifted Yangian has the {\em canonical filtration} which we denote
$Y_n(\sigma) = \bigcup_{r \geq 0} \cF_r Y_n(\sigma)$
and is defined by declaring that $D_i^{(r)}, E_{i,j}^{(r)}, F_{i,j}^{(r)} \in \cF_r Y_n(\sigma)$, i.e.\
that $\cF_r Y_n(\sigma)$ is the spanned by the monomials in these elements of total degree $\le r$.
It is immediate from the relations \eqref{e:r2}--\eqref{e:rel15}
that the associated graded algebra $\gr Y_n(\sigma)$ is commutative.

The {\em truncated shifted Yangian of level $l$} is denoted $Y_{n,l}(\sigma)$ and
defined to be the quotient of $Y_n(\sigma)$ by the ideal generated by $\{D_1^{(r)} \mid r> p_1\}$;
this definition is taken from \cite[Section 6]{BKshift} where it is given for characteristic 0.
We recall that $l = p_n$, so that $p_1 = l - s_{1,n} - s_{n,1}$.
The truncated shifted Yangian inherits the canonical filtration from $Y_n(\sigma)$ and we write
$Y_{n,l}(\sigma) = \bigcup_{i \geq 0} \cF_i Y_{n,l}(\sigma)$. The associated
graded algebra $\gr Y_{n,l}(\sigma)$ is certainly commutative, as it is a quotient of $\gr Y_n(\sigma)$.
When working with $Y_{n,l}(\sigma)$ we often abuse notation by using the same
symbols $D_i^{(r)}, E_{i,j}^{(r)}, F_{i,j}^{(r)}$ to refer to the elements of $Y_n(\sigma)$ and
their images in $Y_{n, l}(\sigma)$.

The next lemma gives a spanning set for $Y_{n,l}(\sigma)$ and should be viewed as
a modular version of \cite[Lemma~6.1]{BKshift}; though we note that it
is less general as we do not deal with parabolic presentations here.
We recover the full PBW theorem for $Y_{n,l}(\sigma)$, i.e.\ that the spanning
set given in the next lemma is actually a basis,
once we have clarified the connection with $U(\g,e)$ in Theorem~\ref{T:gensandPBW}.

\begin{Lemma}\label{L:YPBWspan}
The monomials in the elements
\begin{equation}\label{e:truncYgens}
\begin{array}{c}
\{D_i^{(r)} \mid 1 \le i \le n, 0 < r \le p_i \} \cup \{E_{i,j}^{(r)} \mid 1 \le i  < j \le n, s_{i,j} < r \le s_{i,j} + p_i\} \\
\cup \, \{F_{i,j}^{(r)} \mid 1\le i < j \le n, s_{j, i} < r \le s_{j,i} + p_i\}
\end{array}
\end{equation}
in any fixed order form a spanning set of $Y_{n,l}(\sigma)$.
\end{Lemma}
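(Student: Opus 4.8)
The plan is to follow the strategy of \cite[Lemma~6.1]{BKshift}, reducing the claim to showing that in $Y_{n,l}(\sigma)$ all the ``higher'' generators $D_i^{(r)}$ for $r>p_i$, $E_{i,j}^{(r)}$ for $r>s_{i,j}+p_i$, and $F_{i,j}^{(r)}$ for $r>s_{j,i}+p_i$ can be rewritten in terms of monomials in the bounded generators listed in \eqref{e:truncYgens}. Since the full set of elements in \eqref{e:pbwgens} already spans $Y_n(\sigma)$ (and hence their images span $Y_{n,l}(\sigma)$), it suffices to express each such higher generator, modulo the truncation ideal, as a polynomial in the bounded ones. The first step is to set up the induction: I would induct on $r$, and for fixed $r$ on a secondary parameter measuring how far $(i,j)$ is from the ``corner'' $(1,n)$ — precisely the quantity controlling $s_{i,j}$, $s_{j,i}$ — so that the defining relations \eqref{e:Eij}, \eqref{e:Fij} let me reduce $E_{i,j}^{(r)}$ and $F_{i,j}^{(r)}$ to the cases $j=i+1$, i.e. to the Chevalley generators $E_i^{(r)}$, $F_i^{(r)}$, and to the $D_i^{(r)}$.

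The heart of the argument is the case of $D_1^{(r)}$ for $r>p_1$: these are zero in $Y_{n,l}(\sigma)$ by definition, so $\widetilde D_1^{(r)}$ for $r>0$ becomes a polynomial in $D_1^{(1)},\dots,D_1^{(p_1)}$ via the recursion $\widetilde D_1^{(r)}=-\sum_{t=1}^r D_1^{(t)}\widetilde D_1^{(r-t)}$ (truncating the sum at $t=p_1$). Then I propagate upward through the index $i$: using relation \eqref{e:r3} with $i=1$, which expresses $[E_1^{(r)},F_1^{(s)}]$ in terms of $D_2^{(\bullet)}\widetilde D_1^{(\bullet)}$, together with \eqref{e:r4}--\eqref{e:r5} relating $D_i^{(r)}$ to $D_i^{(\text{lower})}E_j^{(\bullet)}$ and $F_j^{(\bullet)}D_i^{(\text{lower})}$, one shows inductively that each $D_i^{(r)}$ for $r>p_i$ lies in the span of monomials in bounded generators. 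Concretely, the $i=1$ case of \eqref{e:r4} gives a recursion $D_1^{(r+s-1)}E_1^{(\bullet)}$ appearing on the right, and combined with the vanishing of $D_1^{(r)}$ for $r>p_1$ this forces relations among the $E_1^{(\bullet)}$ and $D_1^{(\bullet)}$ of the required bounded type; the same mechanism, fed forward through $[D_i^{(r)},E_i^{(s)}]$ and $[E_i^{(r)},F_i^{(s)}]$, handles $E_i^{(r)}$, $F_i^{(r)}$ and $D_{i+1}^{(r)}$. Throughout, the canonical filtration is the bookkeeping device: each rewriting step replaces a generator in $\cF_rY_{n,l}(\sigma)$ by monomials of filtered degree $\le r$ in bounded generators, so the process terminates and produces genuine spanning expressions rather than an infinite regress.

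The step I expect to be the main obstacle is verifying that these rewriting relations, which in characteristic $0$ are established in \cite{BKshift} by passing to the Yangian $Y_n$ and its RTT/loop realization, survive reduction modulo $p$. I would handle this by noting that all the relevant identities among the $D_i^{(r)},E_{i,j}^{(r)},F_{i,j}^{(r)}$ are consequences purely of the Drinfeld-type relations \eqref{e:r2}--\eqref{e:rel16} together with the recursive definitions of $\widetilde D_i^{(r)}$ and of $E_{i,j}^{(r)},F_{i,j}^{(r)}$; since \cite[Theorem~4.15]{BT} gives exactly this presentation of $Y_n(\sigma)$ over any field, and \cite[Theorem~4.14]{BT} gives the characteristic-independent PBW basis \eqref{e:pbwgens}, every manipulation in \cite[Section~6]{BKshift} that is derived formally from the presentation transfers verbatim. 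The one place requiring care is that \cite{BKshift} sometimes uses generating-function (power series) identities; these must be interpreted coefficient-by-coefficient, and one checks that only finitely many terms enter each coefficient and that no division by an integer occurs — the recursions above involve only integer coefficients $\pm1$, so there is no denominator to clear. With that verification in place, the induction closes and the monomials in \eqref{e:truncYgens} span $Y_{n,l}(\sigma)$.
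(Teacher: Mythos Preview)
Your proposal is correct and follows essentially the same strategy as the paper's proof: both start from the truncation relation $D_1^{(r)}=0$ for $r>p_1$, propagate through the relations \eqref{e:r3}--\eqref{e:r5} and the recursive definitions \eqref{e:Eij}--\eqref{e:Fij} to bound all higher generators, and use the canonical filtration to guarantee termination, noting throughout that the manipulations involve only integer coefficients and so are characteristic-free. The only difference is organizational: the paper inducts on the maximal index $k$ appearing (introducing sets $\Omega_k$, $\Omega_{k,E}$, $\widehat\Omega_k$ and two explicit commutator facts playing the role of your propagation step), whereas you induct primarily on the superscript $r$; both schemes carry the same content.
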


\begin{proof}
Our proof uses the arguments in the proof of \cite[Lemma~6.1]{BKshift}.
As we are not using the more general parabolic presentations of the Yangian as in that proof, we outline
the arguments required for the convenience of the reader.

During the proof we frequently refer to degree, by which we
always mean filtered degree for the canonical filtration; on occasion
we speak about the total degree of a monomial to make the intended meaning clearer.
Until the final paragraph we use the word monomials to mean unordered
monomials, as this simplifies the exposition.
We frequently use that $\gr Y_n(\sigma)$ is commutative, so for $u \in Y_{n,l}(\sigma)$ of degree $r$
and $v \in Y_{n,l}(\sigma)$ of degree $s$, the commutator $[u,v]$ has degree $ \leq r+s-1$.

For $1 \le k \le n$ and $s \ge 1$, we let:
\begin{itemize}
\item $\Omega_k$ be the set of generators given in \eqref{e:truncYgens} with $i,j \le k$;
\item $\Omega_{k,E}$ be the generators in $\Omega_k$ along with the generators $E_{i,k+1}^{(r)}$
with $1 \le i \le k$ and $s_{i,k+1} < r \le s_{i,k+1}+p_{k+1}$; and
\item $\widehat \Omega_k$ be the set of generators from \eqref{e:pbwgens}
with $i,j \le k$.
\end{itemize}
A key observation for us is:

\smallskip
\noindent
($*$) if $X \in \Omega_k$ with degree $r-s_{k,k+1}$, then $[X,E_k^{s_{k,k+1}+1}]$ can be written
as a linear combination of monomials in $\Omega_{k,E}$ with total degree $r$.

\smallskip
\noindent
This can be checked directly from the relations,
and the definition of $E_{i,j}^{(r)}$ in \eqref{e:Eij}.  A similar statement holds with ``$F$ replacing $E$''.
Further, we have:

\smallskip
\noindent
($\dagger$) if $X \in \Omega_{k,E}$ with degree $r-s_{k+1,k}$, then $[X,F_k^{s_{k+1,k}+1}]$ can be written
as a linear combination of monomials in $\Omega_{k+1} \cup \{\widetilde D_k^{(s)} \mid s = p_k+1,\dots,p_{k+1}\}$
with total degree $r$.

\smallskip
\noindent
Again this is checked directly from the relations, and we note that $\widetilde D_k^{(s)}$ can be
written in terms of $D_k^{(t)}$ for $t \le s$.

We show by induction on $k$ that any element in $\widehat \Omega_k$ of degree $r \ge 0$ can be written
as a linear combination of monomials in the elements of $\Omega_k$ of total degree $r$.

To start the induction we note that the case $k = 1$ is trivial,
because $D_1^{(r)} = 0$ for $r > p_1$ in $Y_{n,l}(\sigma)$.
So suppose inductively we have proved the claim for $\widehat \Omega_k$ and we consider
elements of $\widehat \Omega_{k+1}$.

First consider an element $E_{i,k+1}^{(r)}$ for $r > s_{i,k+1} + p_i$.
For $i<k$,
we use the definition of $E_{i,k+1}^{(r)}$ in \eqref{e:Eij} to
write $E_{i,k+1}^{(r)} = [E_{i,k}^{(r-s_{k,k+1})},E_k^{(s_{k,k+1}+1)}]$.  Using the inductive
hypothesis $E_{i,k}^{(r-s_{k,k+1})}$ can be written
as a sum of monomials in $\Omega_k$ of total degree $r-s_{k,k+1}$.
Now using ($*$) we deduce that $E_{i,k+1}^{(r)}$ can be written as a sum of monomials in $\Omega_{k,E}$
with total degree $r$.
For $i = k$, we have $E_{i,k+1}^{(r)} = E_k^{(r)}$ and we can
use the relation \eqref{e:r4} to write
$$
E_k^{(r)} = \big[D_k^{(r-s_{k,k+1})}, E_k^{(s_{k,k+1}+1)}\big] -  \sum_{t=1}^{r-s_{k,k+1}-1} D_k^{(t)} E_k^{(r-t)}.
$$
The right hand side of the above is an expressions in
elements of $\widehat \Omega_{k+1}$ of degree $r$.
We use property ($*$) to deduce that the first term above
can be written as a sum of monomials in $\Omega_{k,E}$
with total degree $r$.  To deal with the second term we do
an induction on $r$.

We can deal with the elements $F_{i,k+1}^{(r)}$ similarly.

We are left to
consider the elements $D_{k+1}^{(r)}$.
Using \eqref{e:r3}, we write
$$
D_{k+1}^{(r)}  = - \big[E_k^{(r-s_{k+1,k})},F_i^{(s_{k+1,k}+1)}\big]  +
\sum_{t=1}^r D_{i+1}^{(r-t)} \widetilde D_{i}^{(t)},
$$
For the first term on the right hand side we use the above to write $E_k^{r-s_{k+1,k}}$
as a linear combination of monomials in $\Omega_{k,E}$ or total
degree $r-s_{k+1,k}$.  Using ($\dagger$) we rewrite this in terms
of monomials in $\Omega_{k+1} \cup \{\tilde D_k^{(s)} \mid s = p_k+1,\dots,p_{k+1}\}$.
Now we can use the inductive hypothesis to write this as linear combination of
monomials in $\Omega_{k+1}$.  The second term can be dealt with by
induction on $r$.

To finish the proof, we have to observe that for a fixed order on
the elements given in \eqref{e:truncYgens}, an unordered monomial
can be written as a linear combination of ordered monomials.
This is easily done using that $\gr Y_n(\sigma)$ is commutative, an induction on degree,
and what has already been proved.
\end{proof}

Let $Y_{n,l}(\sigma)^{\ab}$ denote the {\em maximal abelian quotient of $Y_{n,l}(\sigma)$}
obtained by factoring out the ideal
generated by all commutators
$\{[u,v] \mid u,v\in Y_{n,l}(\sigma)\}$. So the isomorphism classes of one dimensional representations of
$Y_{n,l}(\sigma)$ are in one-to-one correspondence with maximal ideals of $Y_{n,l}(\sigma)^{\ab}$.
A calculation due to Premet within the proof of \cite[Theorem~3.3]{PrCQ}
shows that $Y_{n,l}(\sigma)^{\ab}$ is generated by a particular
subset of the elements \eqref{e:truncYgens} as stated in the following
lemma.  Although \cite[Theorem~3.3]{PrCQ} is only stated in the characteristic 0 case,
the required calculation works directly from the relations
and we can observe that it is valid in characteristic $p$.

\begin{Lemma}\label{L:1dcal}
The algebra $Y_{n,l}(\sigma)^{\ab}$ is generated by the $l$ elements
$$
\{\dot D_i^{(r)} \mid i=1,...,n, 0 < r \le p_i - p_{i-1}\},
$$
where $\dot D_i^{(r)}$ denotes the image of $D_i^{(r)}$ in $Y_{n,l}(\sigma)^{\ab}$.
\end{Lemma}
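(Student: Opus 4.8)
The plan is to reproduce the calculation from the proof of \cite[Theorem~3.3]{PrCQ}, checking that it invokes only the defining relations \eqref{e:r2}--\eqref{e:rel16}, the inductive definitions \eqref{e:Eij}--\eqref{e:Fij}, the recursion defining $\widetilde D_i^{(r)}$, and the spanning set of Lemma~\ref{L:YPBWspan}, none of which involves the characteristic. Throughout set $p_0 := 0$, write $S := \{\dot D_i^{(r)} \mid 1 \le i \le n,\ 0 < r \le p_i - p_{i-1}\}$ --- a set of $\sum_{i=1}^n(p_i - p_{i-1}) = p_n = l$ elements of $Y_{n,l}(\sigma)^{\ab}$ --- and let $\kk[S] \sub Y_{n,l}(\sigma)^{\ab}$ be the subalgebra it generates. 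By Lemma~\ref{L:YPBWspan} the elements in \eqref{e:truncYgens} generate $Y_{n,l}(\sigma)$ as a $\kk$-algebra, so their images generate $Y_{n,l}(\sigma)^{\ab}$; hence it suffices to show that the image of each element of \eqref{e:truncYgens} lies in $\kk[S]$.

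First I would dispose of the off-diagonal generators. For $j > i+1$, formula \eqref{e:Eij} expresses $E_{i,j}^{(r)}$ as a commutator of two elements of $Y_{n,l}(\sigma)$, so its image in $Y_{n,l}(\sigma)^{\ab}$ is zero, and likewise for $F_{i,j}^{(r)}$ via \eqref{e:Fij}. For the remaining case $E_i^{(s)} = E_{i,i+1}^{(s)}$, specialising $r=1$ and the row-index to $i$ in \eqref{e:r4} gives $[D_i^{(1)}, E_i^{(s)}] = E_i^{(s)}$, and similarly \eqref{e:r5} gives $[D_i^{(1)}, F_i^{(s)}] = -F_i^{(s)}$, so $\dot E_i^{(s)} = \dot F_i^{(s)} = 0$. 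Thus $Y_{n,l}(\sigma)^{\ab}$ is already generated by $\{\dot D_i^{(r)} \mid 1 \le i \le n,\ 0 < r \le p_i\}$, and it remains to express $\dot D_i^{(r)}$ for $r > p_i - p_{i-1}$ in terms of $S$.

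The heart of the argument is an induction on $i$ establishing the formally stronger statement that $\dot D_i^{(r)} \in \kk[S]$ for \emph{all} $r \ge 1$. When $i = 1$ this holds because $\dot D_1^{(r)} \in S$ for $0 < r \le p_1 = p_1 - p_0$ while $D_1^{(r)} = 0$ in $Y_{n,l}(\sigma)$ for $r > p_1$. For the inductive step, assume $\dot D_{i-1}^{(r)} \in \kk[S]$ for all $r \ge 1$; since $\widetilde D_{i-1}^{(t)}$ is a polynomial in $D_{i-1}^{(1)},\dots,D_{i-1}^{(t)}$ by its defining recursion, $\dot{\widetilde D}_{i-1}^{(t)} \in \kk[S]$ for all $t \ge 0$. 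Evaluating relation \eqref{e:r3} at the pair of row-indices $(i-1,i-1)$, its left-hand side is a commutator and so vanishes in $Y_{n,l}(\sigma)^{\ab}$; isolating the $t=0$ term (using $\widetilde D_{i-1}^{(0)}=1$) yields
\begin{equation*}
\dot D_i^{(m)} = -\sum_{t=1}^{m} \dot D_i^{(m-t)}\,\dot{\widetilde D}_{i-1}^{(t)}
\qquad \text{for } m \ge s_{i-1,i} + s_{i,i-1} + 1 = p_i - p_{i-1} + 1 ,
\end{equation*}
the displayed equality being the shift-matrix identity $s_{i-1,i} + s_{i,i-1} = p_i - p_{i-1}$ for adjacent rows of $\pi$. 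An inner induction on $m$ now gives $\dot D_i^{(m)} \in \kk[S]$ for all $m > p_i - p_{i-1}$: on the right, each $\dot D_i^{(m-t)}$ lies in $S$ when $m-t \le p_i - p_{i-1}$ and in $\kk[S]$ by the inner induction otherwise, while $\dot{\widetilde D}_{i-1}^{(t)} \in \kk[S]$. Combined with $\dot D_i^{(r)} \in S$ for $0 < r \le p_i - p_{i-1}$, this closes the induction, and since every element of \eqref{e:truncYgens} then maps into $\kk[S]$, the lemma follows.

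I expect the only genuine obstacle to be the bookkeeping: pinning down that the range of validity of \eqref{e:r3} inside $Y_{n,l}(\sigma)$ is exactly $m > p_i - p_{i-1}$ --- this is where the combinatorics of the shift matrix enters --- and organising the two nested inductions so that one never needs a $\dot D_i^{(r)}$ outside the range being controlled. It is precisely to avoid such a gap that one proves ``$\dot D_i^{(r)} \in \kk[S]$ for all $r$'' rather than only for $r \le p_i$, since $\dot{\widetilde D}_{i-1}^{(t)}$ can a priori involve $\dot D_{i-1}^{(r)}$ with $r$ as large as $p_i$, which exceeds $p_{i-1}$. No hypothesis on $p$ is required, as every ingredient used is available over $\kk$ of arbitrary characteristic; this is the sense in which Premet's characteristic-zero computation ``works directly from the relations''.
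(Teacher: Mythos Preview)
Your proposal is correct and carries out exactly the calculation the paper points to: the paper itself gives no proof beyond observing that Premet's argument in \cite[Theorem~3.3]{PrCQ} uses only the defining relations and hence is valid over $\kk$, and what you have written is precisely that argument spelled out --- killing the $E$'s and $F$'s via \eqref{e:r4}--\eqref{e:r5} with $r=1$ (and \eqref{e:Eij}--\eqref{e:Fij}), then running the double induction driven by \eqref{e:r3} together with the shift-matrix identity $s_{i-1,i}+s_{i,i-1}=p_i-p_{i-1}$. The bookkeeping you flag (proving $\dot D_i^{(r)}\in\kk[S]$ for all $r$, not just $r\le p_i$) is handled cleanly and is indeed the right way to avoid circularity.
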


\section{$U(\g,e)$ as modular truncated shifted Yangian}

We proceed with the notation
in Section~\ref{S:prelims} and recall that $U(\g,e)$ is the invariant algebra $U(\p)^{\tw(M)}$
for the twisted adjoint action of $M$ on $U(\p)$.
The goal of the current section is to show that $U(\g,e)$ is isomorphic to the truncated
shifted Yangian $Y_{n,l}(\sigma)$ of level $l$.

First we recall some remarkable formulas
from \cite[\S9]{BKshift}
for elements of $U(\p)$, which are actually invariants
for the twisted adjoint action of $M$ as proved in Lemma~\ref{L:Minvariants}.  We
refer also to \cite[\S3.3]{BKrep}, as our notation is closer to the notation used there.
The weight $\eta \in \t^*$ from \eqref{e:eta} is required to define these invariants, and
we note that $\eta$ extends to a character of $\p$.
For $e_{i,j} \in \p$ we define
\begin{equation*}
\te_{i,j} :=
e_{i,j} + \eta(e_{i,j}).
\end{equation*}
Now for $1 \le i,j \le n$, $0 \le x < n$ and $r \ge 1$, we let
\begin{equation}\label{e:invaraintsdef}
T_{i,j;x}^{(r)} := \sum_{s=1}^r (-1)^{r-s} \sum_{\substack{i_1,...,i_s \\ j_1,...,j_s}} (-1)^{|\{t=1,\dots,s-1 \mid \row(j_t) \le x\}|}
\te_{i_1, j_1} \cdots \te_{i_s, j_s} \in U(\p)
\end{equation}
where the sum is taken over all $1 \le i_1,\dots,i_s,j_1,\dots,j_s \le N$ such that
\begin{itemize}
\item[(a)] $\col(j_1)-\col(i_1)+\dots+\col(j_s)-\col(i_s) + s = r$;
\item[(b)] $\col(i_t) \le \col(j_t)$ for each $t = 1,\dots,s$;
\item[(c)] if $\row(j_t) > x$, then $\col(j_t) < \col(i_{t+1})$ for each $t = 1,\dots,s-1$;
\item[(d)] if $\row(j_t) \le x$ then $\col(j_t) \ge \col(i_{t+1})$ for each $t = 1,\dots,s-1$;
\item[(e)] $\row(i_1) = i$, $\row(j_s) = j$;
\item[(f)] $\row(j_t) = \row(i_{t+1})$ for each $t = 1,\dots,s-1$.
\end{itemize}
Now define
\begin{align} \label{e:Dinup}
D_i^{(r)} := T_{i,i;i-1}^{(r)} & \text{ for } 1 \le i \le n,  r > 0   \\ \label{e:Einup}
E_i^{(r)} := T_{i,i+1;i}^{(r)} & \text{ for } 1 \leq i < j \leq n, r > s_{i,j}  \\ \label{e:Finup}
F_i^{(r)} := T_{i+1, i; i}^{(r)} & \text{ for } 1 \leq i < j \leq n, r > s_{j,i} .
\end{align}
These elements are denoted by the same symbols
as the generators of the truncated shifted Yangian and this will be justified
later.  First we prove that they are invariants for the twisted adjoint action of $M$
and thus are elements of $U(\g,e)$.

\begin{Lemma}\label{L:Minvariants}
The elements $D_i^{(r)}$, $E_i^{(r)}$ and $F_i^{(r)}$ of $U(\p)$
defined in \eqref{e:Dinup}, \eqref{e:Einup} and  \eqref{e:Finup} are all invariant under the twisted adjoint action of $M$.
\end{Lemma}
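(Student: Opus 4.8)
The plan is to reduce the assertion to the characteristic-zero case --- where, after unwinding conventions, these elements are exactly the generators of the finite $W$-algebra constructed by Brundan--Kleshchev --- by a reduction modulo $p$ argument that exploits the integral forms fixed in \S\ref{ss:gl}. First I would observe that \eqref{e:invaraintsdef}, \eqref{e:Dinup}, \eqref{e:Einup}, \eqref{e:Finup} make sense over any commutative ring: the coefficients are all $\pm 1$ and the weight $\eta$ of \eqref{e:eta} has integer entries, so $\te_{i,j} = e_{i,j} + \eta(e_{i,j}) \in U(\p_\Z)$, and these formulas therefore define elements $D_i^{(r)}, E_i^{(r)}, F_i^{(r)} \in U(\p_\Z)$ whose images under base change to $\kk$ are the elements in the statement and under base change to $\C$ are the corresponding elements over $\C$. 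Next I would check that the twisted adjoint action is defined integrally and is base-change compatible: since $e$ has integer entries, $\chi$ takes integer values on $\m_\Z$, so $(\m_\chi)_\Z := \{x - \chi(x) \mid x \in \m_\Z\} \sub U(\g_\Z)$ is defined, and the PBW theorem over $\Z$ gives $U(\g_\Z) = U(\g_\Z)(\m_\chi)_\Z \oplus U(\p_\Z)$; thus $\pr$ restricts to a $\Z$-linear projection $U(\g_\Z) \onto U(\p_\Z)$ which base-changes to the projections over $\kk$ and over $\C$. Moreover, by \eqref{e:adjoint} the adjoint action of $u_{i,j}(s)$ carries $U(\p_\Z)$ into $U(\g_\Z)[s]$, so composing with $\pr$ exhibits $\tw(u_{i,j}(s)) \cdot (-)$ as a polynomial in $s$ with coefficients in $U(\p_\Z)$, again compatible with base change.

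The argument then runs as follows. Since $M$ is generated by the root subgroups $u_{i,j}(\kk)$ with $\col(j) < \col(i)$, it suffices to show, for each such $(i,j)$ and each $T \in \{D_k^{(r)}, E_k^{(r)}, F_k^{(r)}\}$, that $\tw(u_{i,j}(s)) \cdot T = T$ in $U(\p)$ for all $s \in \kk$. Working with the integral lifts, $\tw(u_{i,j}(s)) \cdot T - T$ is a polynomial in $s$ with coefficients in the free $\Z$-module $U(\p_\Z)$; its image in $U(\p_\C)[s]$ is the same expression computed with the characteristic-zero elements, and this vanishes identically, because over $\C$ the elements \eqref{e:Dinup}--\eqref{e:Finup} are (a renormalization by $\eta$ of) the invariants that generate the finite $W$-algebra in \cite[\S9]{BKshift} and \cite[\S3.3]{BKrep}, hence are fixed by the twisted adjoint action of $M$ over $\C$. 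Since $U(\p_\Z) \into U(\p_\C)$ by freeness, every coefficient of the polynomial already vanishes in $U(\p_\Z)$, and therefore in $U(\p)$ after tensoring with $\kk$. This yields the required $M$-invariance in characteristic $p$.

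The only point that is not purely formal is the one invoked at the end: one must line up the conventions of this paper --- the pyramid combinatorics, the shift by $\eta$, and the normalization of $U(\g,e)$ --- against those of \cite{BKshift} and \cite{BKrep} in order to identify \eqref{e:invaraintsdef} over $\C$ with the Brundan--Kleshchev invariants. I expect this bookkeeping to be the main obstacle, though it is of the same routine nature as the other translations between \cite{GT}, \cite{BKshift} and \cite{BKrep} used elsewhere in the paper. If one preferred to avoid characteristic zero altogether, one could instead transcribe the combinatorial verification of twisted $M$-invariance from \cite[\S9]{BKshift}: it manipulates the defining sum \eqref{e:invaraintsdef} under the substitution supplied by \eqref{e:adjoint} followed by $\pr$, and every step of that manipulation is insensitive to the characteristic, hence is valid over $\kk$.
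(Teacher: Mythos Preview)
Your proposal is correct and follows essentially the same route as the paper: both arguments lift the elements and the twisted action of the root subgroups $u_{i,j}(s)$ to the integral form $U(\p_\Z)$, view $\tw(u_{i,j}(s))\cdot T - T$ as a polynomial in $s$ with coefficients in $U(\p_\Z)$, invoke the Brundan--Kleshchev result over $\C$ to kill these coefficients via the injection $U(\p_\Z)\hookrightarrow U(\p_\C)$, and then base-change to $\kk$. The only cosmetic difference is that the paper cites \cite[Lemma~10.12]{BKshift} for $\m_\C$-invariance and then exponentiates to $M_\C$, whereas you appeal directly to $M$-invariance over $\C$; and the paper phrases the vanishing-of-coefficients step as $\bigcap_{s\in\C}(t-s)U(\p_\C)[t]=0$ rather than your freeness argument.
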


\begin{proof}
Recall from \S\ref{ss:gl} that $\g_\C = \gl_N(\C)$ and $\g_\Z = \gl_N(\Z)$; and we have the
subalgebras $\p_\C$, $\m_\C$, $\p_\Z$ and $\m_\Z$.

Let $X_i^{(r)} \in U(\p)$ be one of the elements
defined by \eqref{e:Dinup}, \eqref{e:Einup} or \eqref{e:Finup}.
Throughout this proof we abuse notation slightly by simultaneously viewing $X_i^{(r)}$ also
as an element of $U(\p_\Z)$ and of $U(\p_\C)$.

According to \cite[Lemma~10.12]{BKshift}, we have that $X_i^{(r)} \in U(\p_\C)$ is invariant under the
twisted adjoint action of $\m_\C$.  This twisted adjoint action is defined by
\begin{equation*}
\tw(x) u := \pr (\ad(x) u)
\end{equation*}
for $x \in \m_\C$ and $u \in U(\p_\C)$.
We see that
the twisted adjoint action of $\m_\C$
exponentiates to give the twisted adjoint action of $M_\C$ as
defined in \eqref{e:twistM}.
Thus we deduce that $X_i^{(r)}$ is an invariant
for the twisted adjoint action of $M_\C$.

Now let $1 \le i,j \le N$ such that $e_{i,j} \in \m_\C$.
Let $t$ be an indeterminate and
consider the homomorphism $U(\g_\C)[t] \to U(\g_\C)[t]$
determined by $e_{k,l} \mapsto e_{k,l} + t \delta_{j,k}e_{i,l} - t\delta_{l,i}e_{k,j} - t^2 \delta_{j,k}\delta_{l,i} e_{i,j}$,
which ``gives the action of $u_{i,j}(t)$ on $e_{k,l}$'' as in \eqref{e:adjoint}.
This preserves the integral form $U(\g_\Z)[t]$ of $U(\g_\C)[t]$ and,
composing with the projection $U(\g_\Z)[t] \to U(\p_\Z)[t]$ along the direct sum
decomposition $U(\g_\Z)[t] = U(\p_\Z)[t] \oplus U(\g_\Z)[t] \{x - \chi(x) \mid x\in \m_\Z\}$,
we obtain a $\Z$-module homomorphism $\psi_{i,j} : U(\p_\Z)[t] \to U(\p_\Z)[t]$. By the observations of the previous paragraph,
$\psi_{i,j}(X_k^{(r)}) - X_k^{(r)} \in (t - s)U(\p_\C)[t]$ for every
$s \in \C$.  It follows that $\psi_{i,j}(X_k^{(r)}) - X_k^{(r)} = 0$ in $U(\p_\C)[t]$.
Note that $\bigcap_{s \in \C} (t - s)U(\p_\C)[t] = 0$
follows from the fact that $U(\p_\C)[t]$ is a free $\C[t]$-module,
and $\bigcap_{s \in \C} (t-s)\C[t] = 0$.

Now consider the equation
\begin{equation*}\label{e:inveq}
\psi_{i,j}(X_k^{(r)}) \otimes 1 - X_k^{(r)} \otimes 1 = 0,
\end{equation*}
valid in $U(\p_\Z)[t] \otimes_\Z \kk \cong U(\p)[t]$.
Examining the image
in $U(\p) \cong U(\p)[t] / (t-s) U(\p)[t]$ for all $s \in \kk$,
we deduce that $X_k^{(r)} \in U(\p)$ is invariant under the twisted adjoint action of the
root subgroup $u_{i,j}(\kk)$.
Hence, $X_k^{(r)} \in U(\p)$ is invariant under the twisted adjoint action of $M$, and this completes the proof.
\end{proof}

We define elements $E_{i,j}^{(r)} \in U(\p)$ for $1 \le i < j \le n$ and $r > s_{i,j}$ from the
expressions for $E_i^{(r)} \in U(\p)$ given in \eqref{e:Einup}
and the recursive formula in \eqref{e:Eij}; we define $F_{i,j}^{(r)} \in U(\p)$
similarly.  From these definitions and Lemma~\ref{L:Minvariants}, we have
that these $E_{i,j}^{(r)}$ and $F_{i,j}^{(r)}$ are actually elements of $U(\g,e)$.
For the next lemma we recall the basis for $\g^e$ from Lemma~\ref{L:centraliserbasis},
and the notation for elements $\be^\ba$ in $U(\p)$
given in \eqref{e:e^a}.

\begin{Lemma}\label{L:lineartops}
$ $
\begin{itemize}
\item[(a)] For $1 \le i \le n$, and $1 \le r \le p_i$, we have
$D_i^{(r)} = (-1)^{r-1} c_{i,i}^{(r)} + u$, where $u$ is a linear combination of
terms $\be^\ba$ satisfying either $|\ba|_e = r$ and $|\ba| > 1$, or  $|\ba|_e < r$.
\item[(b)] For $1 \le i< j \le n$, and $s_{i,j} < r \le p_i+ s_{i,j}$, we have
$E_{i,j}^{(r)} = (-1)^{r-1} c_{i,j}^{(r)} + u$, where $u$ is a linear combination of
terms $\be^\ba$ satisfying either $|\ba|_e = r$ and $|\ba| > 1$, or  $|\ba|_e < r$.
\item[(c)] For $1 \le i < j \le n$, and $s_{j,i} < r \le p_i+ s_{j,i}$, we have
$F_{i,j}^{(r)} = (-1)^{r-1} c_{j,i}^{(r)} + u$, where $u$ is a linear combination of
terms $\be^\ba$ satisfying either $|\ba|_e = r$ and $|\ba| > 1$, or  $|\ba|_e < r$.
\item[(d)] The monomials in
$$
\begin{array}{c}
\{D_i^{(r)} \mid 1 \le i \le n, 1 \le r \le p_i\} \cup \{E_{i,j}^{(r)} \mid
1 \le i< j \le n, s_{i,j} < r \le p_i+ s_{i,j}\}  \\ \cup
\{F_{i,j}^{(r)} \mid
1 \le i< j \le n, s_{j,i} < r \le p_i+ s_{j,i}\}
\end{array}
$$
taken in any fixed order form a basis of $U(\g,e)$.
\end{itemize}
\end{Lemma}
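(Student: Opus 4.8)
The plan is to prove (a)--(c) by extracting the leading term of each defining expression, and then obtain (d) by identifying the resulting elements with an instance of the PBW basis of Proposition~\ref{P:PBW}; note that $D_i^{(r)},E_i^{(r)},F_i^{(r)}\in U(\g,e)$ by Lemma~\ref{L:Minvariants} and that the remaining $E_{i,j}^{(r)},F_{i,j}^{(r)}$ lie in $U(\g,e)$ as iterated commutators of such elements, so the content is the shape of these elements together with the basis claim. First I would treat the cases handled directly by \eqref{e:invaraintsdef}, namely $D_i^{(r)}=T_{i,i;i-1}^{(r)}$, $E_i^{(r)}=T_{i,i+1;i}^{(r)}$ and $F_i^{(r)}=T_{i+1,i;i}^{(r)}$. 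The summand with $s=1$ in \eqref{e:invaraintsdef} is $(-1)^{r-1}\sum\te_{i_1,j_1}$, taken over the $(i_1,j_1)$ with the prescribed rows and $\col(j_1)-\col(i_1)+1=r$; since $\eta$ is supported on the diagonal this equals $(-1)^{r-1}$ times the relevant $c^{(r)}$-element of Lemma~\ref{L:centraliserbasis}(a), plus (only in the $D_i^{(1)}$ case) a scalar, which has $|\ba|_e=0$. For $s\ge 2$ each summand is a product $\te_{i_1,j_1}\cdots\te_{i_s,j_s}$; expanding each $\te$ as a matrix unit plus a scalar and keeping $m\le s$ matrix-unit factors (the discarded ones force $i_t=j_t$, of $\col$-difference $0$), the retained factors have total $\col$-difference $r-s$. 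Rewriting the resulting monomial in the $\be^\ba$ basis -- which by the commutator identity recorded at the end of \S\ref{ss:walg} has leading part with $|\ba|_e=r-(s-m)$, $|\ba|=m$, plus corrections of strictly smaller $|\ba|_e$ -- produces only monomials with $|\ba|_e=r$ and $|\ba|=s>1$ (the case $m=s$) or with $|\ba|_e<r$. This is exactly the shape asserted in (a) and in (b), (c) for $j=i+1$.

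For $E_{i,j}^{(r)}$ and $F_{i,j}^{(r)}$ with $j>i+1$ I would induct on $j-i$ using \eqref{e:Eij} and \eqref{e:Fij}. Writing $E_{i,j}^{(r)}=[E_{i,j-1}^{(r-s_{j-1,j})},E_{j-1}^{(s_{j-1,j}+1)}]$ and substituting the inductive expression $E_{i,j-1}^{(r-s_{j-1,j})}=(-1)^{r-s_{j-1,j}-1}c_{i,j-1}^{(r-s_{j-1,j})}+u'$ and the already-proved $E_{j-1}^{(s_{j-1,j}+1)}=(-1)^{s_{j-1,j}}c_{j-1,j}^{(s_{j-1,j}+1)}+u''$, I expand the commutator into four brackets. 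The bracket of the two leading terms is computed inside $\g^e\subseteq\p$ by Lemma~\ref{L:centraliserbasis}(b): it equals $c_{i,j}^{(r)}$ (the $\delta_{i,j}$ summand vanishes as $i<j$), and the signs combine to $(-1)^{r-1}$. Each of the other three brackets contains a factor $u'$ or $u''$; since $c_{i,j-1}^{(r-s_{j-1,j})}$ and $c_{j-1,j}^{(s_{j-1,j}+1)}$ have $|\ba|=1$ with $|\ba|_e$'s summing to $r-1$, the commutator degree estimate from \S\ref{ss:walg} shows each such bracket contributes only monomials $\be^\ba$ with $(|\ba|_e,|\ba|)$ of type $(r,>1)$ or with $|\ba|_e<r$. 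The argument for $F_{i,j}^{(r)}$ is the mirror image, and one checks the recursion stays within the stated index ranges using $s_{i,j}=s_{i,j-1}+s_{j-1,j}$.

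Part (d) is then formal. By (a)--(c) and Lemma~\ref{L:centraliserbasis}(a), the elements listed in (d) are in bijection with the basis $\{c_{i,j}^{(r)}\}$ of $\g^e$ via $D_i^{(r)}\leftrightarrow c_{i,i}^{(r)}$, $E_{i,j}^{(r)}\leftrightarrow c_{i,j}^{(r)}$ and $F_{i,j}^{(r)}\leftrightarrow c_{j,i}^{(r)}$ (for $i<j$), with the index ranges matching because $s_{i,i}=0$ and $p_{\min(i,j)}=p_i$ for $i<j$. Each $c_{i,j}^{(r)}$ is homogeneous of degree $r-1$ for the good grading, and (a)--(c) say precisely that $(-1)^{r-1}$ times each listed element has the form \eqref{e:Theta} attached to the corresponding centraliser basis element, with grading degree $r-1$ in place of $n_i$ -- the vanishing of the coefficient when $(|\ba|_e,|\ba|)=(n_i+1,1)$ being exactly our condition that $|\ba|_e=r$ forces $|\ba|>1$. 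Taking the $c_{i,j}^{(r)}$, in any chosen order, as the basis of $\g^e$ in Proposition~\ref{P:PBW} and these rescaled elements as the $\Theta(x_i)$, part (b) of that proposition gives that the ordered monomials form a basis of $U(\g,e)$; rescaling generators by $\pm 1$ and choosing the order arbitrarily does not affect this.

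I expect the main obstacle to be the degree bookkeeping in (a)--(c): one must track both $|\cdot|_e$ and $|\cdot|$ through every expansion and reordering of the $\te$-products, and verify that the sign $(-1)^{r-1}$ propagates correctly along the recursions \eqref{e:Eij} and \eqref{e:Fij}. This is routine but laborious; a secondary point needing care is lining up the index ranges precisely, so that the generators in (d) correspond to the centraliser basis of Lemma~\ref{L:centraliserbasis}(a) bijectively and without repetition.
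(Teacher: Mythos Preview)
Your proposal is correct and follows essentially the same approach as the paper: directly analyse the $s=1$ and $s\ge 2$ summands of \eqref{e:invaraintsdef} to handle $D_i^{(r)}$, $E_i^{(r)}$, $F_i^{(r)}$, then induct via \eqref{e:Eij}--\eqref{e:Fij} using Lemma~\ref{L:centraliserbasis}(b) and the commutator estimate \eqref{e:commTheta}, and finally deduce (d) from Proposition~\ref{P:PBW}. Your write-up is in fact more detailed than the paper's (which leaves the $s\ge 2$ bookkeeping and the inductive step as routine verifications), and the points you flag as potential obstacles are exactly the routine checks involved.
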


\begin{proof}
We begin by proving (a).  Consider the expression
given for $D_i^{(r)}$ given by \eqref{e:invaraintsdef} and \eqref{e:Dinup}.
We can verify that the terms for $s > 1$ are a linear combination of
terms $\be^\ba$ satisfying $|\ba|_e = r$ and $|\ba| > 1$, or  $|\ba|_e < r$.
So we are left to show that the $s=1$ part is precisely
$(-1)^{r-1} c_{i,i}^{(r)}$, and this follows directly from the definitions.

The cases of (b) and (c) for $j = i+1$ are proved similarly to (a).
Then using the definitions in \eqref{e:Eij} and \eqref{e:Fij} along
with Lemma~\ref{L:centraliserbasis}(b), and \eqref{e:commTheta}, we deduce
the statement for all $i$ and $j$.

Part (d) is now an immediate consequence of
Lemma~\ref{L:centraliserbasis}(a) and Proposition~\ref{P:PBW}.
\end{proof}

We are now in a position to
prove the main result of the section, showing that $Y_{n,l}(\sigma)$ is
isomorphic to $U(\g,e)$, which is a modular analogue of \cite[Theorem~10.1]{BKshift}

\begin{Theorem}
\label{T:gensandPBW}
The map from $Y_{n,l}(\sigma)$ to $U(\g,e)$ determined by sending each element of $Y_{n,l}(\sigma)$ in
$$
\begin{array}{c}
\{D_i^{(r)} \mid 1 \le i \le n, r \ge 1\} \cup
\{E_i^{(r)} \mid 1 \le i < n, r > s_{i,i+1} \}
\cup \{F_i^{(r)} \mid  1 \le i < n, r > s_{i+1,i} \}
\end{array}
$$
to the element of $U(\g,e)$ denoted by the same symbol
defines an isomorphism
$$
Y_{n,l}(\sigma) \isoto U(\g,e).
$$
\end{Theorem}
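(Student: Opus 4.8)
The plan is to produce the map of the theorem as an algebra homomorphism $\phi\colon Y_{n,l}(\sigma)\to U(\g,e)$, by checking that the elements $D_i^{(r)},E_i^{(r)},F_i^{(r)}\in U(\g,e)$ of \eqref{e:Dinup}--\eqref{e:Finup} satisfy the defining relations \eqref{e:r2}--\eqref{e:rel16} of $Y_n(\sigma)$ together with the truncation relations $D_1^{(r)}=0$ for $r>p_1$, and then to show that $\phi$ is bijective by matching the spanning set of Lemma~\ref{L:YPBWspan} with the basis of Lemma~\ref{L:lineartops}(d).

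I would verify the relations by reduction modulo $p$, in the spirit of the proof of Lemma~\ref{L:Minvariants}. Since $\eta(e_{i,j})\in\Z$, the elements $\te_{i,j}=e_{i,j}+\eta(e_{i,j})$, and hence all of the $D_i^{(r)},E_i^{(r)},F_i^{(r)}$ built from them via the explicit formula \eqref{e:invaraintsdef}, have integral analogues in $U(\p_\Z)$; I write the same symbols for these and for their images in $U(\p_\C)$ and in $U(\p)\cong U(\p_\Z)\otimes_\Z\kk$. By \cite[Theorem~10.1]{BKshift} and the computations in its proof, the elements of $U(\p_\C)$ satisfy every relation in the presentation of $Y_{n,l}(\sigma)$, that is, all of \eqref{e:r2}--\eqref{e:rel16} (the relations \eqref{e:rel15}--\eqref{e:rel16} being automatic in characteristic $0$) together with $D_1^{(r)}=0$ for $r>p_1$. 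Each such relation asserts that a fixed $\Z$-linear combination of ordered monomials in the $\te_{i,j}$ — that is, an element of $U(\p_\Z)$ — vanishes in $U(\p_\C)$; since a PBW basis of $U(\p_\Z)$ remains a basis after $\otimes_\Z\C$, the inclusion $U(\p_\Z)\hookrightarrow U(\p_\C)$ is injective, so the element already vanishes in $U(\p_\Z)$, hence in $U(\p)$ after $\otimes_\Z\kk$. By Lemma~\ref{L:Minvariants} the relevant elements lie in $U(\g,e)$, so all the relations of $Y_{n,l}(\sigma)$ hold there, and this yields the homomorphism $\phi\colon Y_{n,l}(\sigma)\to U(\g,e)$.

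It then remains to see that $\phi$ is an isomorphism. The elements $E_{i,j}^{(r)}$ and $F_{i,j}^{(r)}$ are defined by the same bracket recursions \eqref{e:Eij}--\eqref{e:Fij} in $Y_{n,l}(\sigma)$ and in $U(\g,e)$, and $\phi$ preserves commutators, so induction on $j-i$ gives $\phi(E_{i,j}^{(r)})=E_{i,j}^{(r)}$ and $\phi(F_{i,j}^{(r)})=F_{i,j}^{(r)}$, while $\phi(D_i^{(r)})=D_i^{(r)}$ by definition. Hence $\phi$ carries the spanning set of monomials from Lemma~\ref{L:YPBWspan} bijectively onto the set of monomials of Lemma~\ref{L:lineartops}(d), which is a basis of $U(\g,e)$. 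Surjectivity of $\phi$ is immediate. For injectivity, any $x\in Y_{n,l}(\sigma)$ is a linear combination of the Lemma~\ref{L:YPBWspan} monomials, and if $\phi(x)=0$ then the matching linear combination of basis elements of $U(\g,e)$ vanishes, forcing every coefficient, and so $x$, to be zero. Thus $\phi$ is an isomorphism; as a by-product the spanning set of Lemma~\ref{L:YPBWspan} is a basis, completing the proof of the PBW theorem for $Y_{n,l}(\sigma)$.

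The main obstacle is the first step: one must be sure that the formulas \eqref{e:invaraintsdef} are genuinely defined over $\Z$ and that \cite[Theorem~10.1]{BKshift}, and the computations in its proof, verify the Yangian relations by $\Z$-coefficient identities, so that they survive base change to characteristic $p$. The modular relations \eqref{e:rel15}--\eqref{e:rel16} deserve a separate word, since they are not among the characteristic-$0$ defining relations, but as noted they still hold in $U(\p_\C)$ and involve only integral elements, so the same reduction applies — in particular there is no trouble when $p=2$. Once $\phi$ exists, its bijectivity is simply bookkeeping with the two PBW-type statements already in hand.
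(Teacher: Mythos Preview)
Your proposal is correct and follows essentially the same approach as the paper: verify the Yangian relations for the explicit elements \eqref{e:Dinup}--\eqref{e:Finup} by reduction modulo $p$ from the characteristic-$0$ result \cite[Theorem~10.1]{BKshift} (handling \eqref{e:rel15}--\eqref{e:rel16} separately, as they follow over $\C$ from \cite[(2.14),(2.15)]{BKshift}), observe directly that $D_1^{(r)}=0$ for $r>p_1$, and then combine Lemma~\ref{L:lineartops}(d) with Lemma~\ref{L:YPBWspan} for surjectivity and injectivity. Your write-up is slightly more explicit about why the integral relations descend and about matching the monomial sets, but the argument is the same.
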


\begin{proof}
Consider the elements $D_i^{(r)}, E_i^{(r)}, F_i^{(r)}$ defined in $U(\p_\C)$ by formulas
\eqref{e:invaraintsdef}, \eqref{e:Dinup}, \eqref{e:Einup}, \eqref{e:Finup}.
It follows from \cite[Theorem~10.1]{BKshift} that these elements satisfy relations \eqref{e:r2}--\eqref{e:r13}.
Also they satisfy \eqref{e:rel15} and \eqref{e:rel16} as these follow, over $\C$ from \cite[(2.14), (2.15)]{BKshift}.
Since all of these relations have integral coefficients they hold in $U(\p_\Z)$, and
thus also in $U(\p) \iso U(\p_\Z) \otimes \kk$.  In addition it is clear from the definition of $D_1^{(r)} \in U(\g,e)$ given in
\eqref{e:invaraintsdef} and \eqref{e:Dinup}, that $D_1^{(r)} = 0$ for $r > p_1$.

Hence, the map described in the statement does give a homomorphism
$Y_{n,l}(\sigma) \to U(\g,e)$.  By Lemma~\ref{L:lineartops}(d), we see that this homomorphism
is surjective, and  using Lemma~\ref{L:YPBWspan}, we deduce that it is injective.
\end{proof}

As mentioned before Lemma~\ref{L:YPBWspan}, we are now able to deduce a PBW theorem for
$Y_{n,l}(\sigma)$, as given in \cite[Corollary~6.3]{BKshift}.  This says that the monomials in the elements given in \eqref{e:truncYgens}
taken in any fixed order form a basis of $Y_{n,l}(\sigma)$, and follows immediately from
Lemma~\ref{L:lineartops} and Theorem~\ref{T:gensandPBW}.

\section{1-dimensional modules for $U(\g,e)$}

We follow similar methods to those in \cite[Section 2]{BrM} to classify the 1-dimensional
modules for $U(\g,e)$.  Before stating this in Theorem~\ref{T:1dW} we require some
notation, and we also use the generators and relations for $U(\g,e)$ given by
Theorem~\ref{T:gensandPBW} to make some initial deductions about 1-dimensional
modules for $U(\g,e)$.

It follows from \eqref{e:r2}
and Lemma~\ref{L:lineartops} that $\{D_i^{(r)} \mid i = 1,\dots,n, 1 \le r \le p_i\}$
generates a subalgebra of $U(\g,e)$ isomorphic to a polynomial algebra in $N$ variables; we denote this subalgebra
by $U(\g,e)^0$.

Let $A \in \Tab_\kk(\pi)$.  For $i=1,\dots,n$, we write $a_{i,1},\dots,a_{i,p_i}$ for the entries in the $i$th row
of $A$ from left to right.  We define the 1-dimensional $U(\g,e)^0$-module $\kk_{\bar A}$
by saying that $D_i^{(r)}$ acts on $\kk_A$ by $e_r(a_{i,1}+i,\dots,a_{i,p_i}+i)$; here
$e_r(x_1,\dots,x_{p_i})$
is the $r$th elementary symmetric polynomial in the indeterminates $x_1,\dots,x_{p_i}$.
It is clear that $\kk_{\bar A}$ depends only on the row equivalence class of $A$.
We note that given $b_1,\dots,b_{p_i} \in \kk$, finding $c_{i,1},\dots,c_{i,p_i}$
such that $e_r(c_1,\dots,c_{p_i}) = b_r$ for each $r$ is equivalent to finding
solutions of the polynomial $t^{p_i}-b_1t^{p_i-1}+\dots+(-1)^{p_i-1}b_{p_i-1}t+ (-1)^{p_i}b_{p_i}$.
Therefore, since $\kk$ is algebraically closed, we see that any 1-dimensional
$U(\g,e)^0$-module is isomorphic to $\kk_{\bar A}$ for some $A \in \Tab_\kk(\pi)$.
Thus we see that the restriction of any 1-dimensional $U(\g,e)$-module to $U(\g,e)^0$ is isomorphic
to $\kk_{\bar A}$ for some $A \in \Tab_\kk(\pi)$, and that such $A$ is defined up to
row equivalence.  Using the relations \eqref{e:r4} and \eqref{e:r5} for $r=1$,
we see that the generators $E_i^{(s)}$ and $F_i^{(s)}$ act as 0 on any 1-dimensional
$U(\g,e)$-module, for all $i$ and $s$.
If the action of $U(\g,e)^0$ on $\kk_{\bar A}$ can be extended to a $U(\g,e)$-module, on which
$E_i^{(s)}$ and $F_i^{(s)}$ act as 0, then we denote
this module by $\widetilde \kk_{\bar A}$.  Our goal is thus to determine when
$\widetilde \kk_{\bar A}$ exists, and this is achieved in the following theorem.

\begin{Theorem} \label{T:1dW}
Let $A \in \Tab_\kk(\pi)$.
There is a 1-dimensional $U(\g,e)$-module $\widetilde \kk_{\bar A}$,
which extends the action of $U(\g,e)^0$ on $\kk_{\bar A}$ if and only
if $A$ is row equivalent to a column connected tableau.
\end{Theorem}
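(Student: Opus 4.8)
The plan is to handle the two implications separately, using the identification $U(\g,e)\iso Y_{n,l}(\sigma)$ of Theorem~\ref{T:gensandPBW} and keeping throughout the monic polynomial $P_i(u):=\prod_{k\text{ in row }i}(u+a_k+i)\in\kk[u]$ of degree $p_i$, whose coefficient of $u^{p_i-r}$ equals $e_r(a_{i,1}+i,\dots,a_{i,p_i}+i)$, i.e.\ the scalar by which $D_i^{(r)}$ acts on $\kk_{\bar A}$. The combinatorial heart of the matter is the equivalence: $A$ is row equivalent to a column connected tableau if and only if $P_1\mid P_2\mid\cdots\mid P_n$. In one direction, a column connected tableau with bottom entry $v_c$ in column $c$ has $P_i(u)=\prod_{c\in C_i}(u+v_c+n)$, where $C_i$ denotes the set of columns of $\pi$ meeting row $i$ and $C_1\subseteq C_2\subseteq\cdots\subseteq C_n$; conversely, given such a divisibility chain one labels the roots of $P_n$ compatibly with this nesting to produce column values, and this recovers the row multisets of $A$ since $\kk_{\bar A}$ depends only on these.

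For the ``if'' part, since $\kk_{\bar A}$ depends only on the row class of $A$ it is enough to treat a column connected $A$. I would take the $1$-dimensional $U(\p)$-module $\widetilde\kk_A$ of weight $\lambda_A-\widetilde\rho$ (which exists precisely because $A$ is column connected, by \S\ref{ss:modules}) and restrict it along the inclusion $U(\g,e)\subseteq U(\p)$. The restriction is a $1$-dimensional $U(\g,e)$-module, so $E_i^{(s)}$ and $F_i^{(s)}$ act as $0$ on it by the remarks opening this section, and it remains to compute the action of $D_i^{(r)}=T_{i,i;i-1}^{(r)}$. On $\widetilde\kk_A$ each $\te_{k,l}$ with $k\neq l$ acts as $0$ (for then $e_{k,l}$ lies in the nilradical of $\p$ or in $[\h,\h]$), while $\te_{k,k}$ acts by the scalar $(\lambda_A-\widetilde\rho+\eta)(e_{k,k})=(\lambda_A-\rho_\h)(e_{k,k})=a_k+\row(k)$, using $\widetilde\rho=\eta+\rho_\h$ from \eqref{e:tilderho}. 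Substituting this into the defining formula \eqref{e:invaraintsdef}, only the monomials with $i_t=j_t$ for every $t$ survive, and the constraints (a)--(f) then force these to run over the $r$-element subsets of the boxes of row $i$, each contributing with sign $+1$; summing gives $e_r(a_{i,1}+i,\dots,a_{i,p_i}+i)$. Hence the restriction is the desired module $\widetilde\kk_{\bar A}$.

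For the ``only if'' part, suppose $\widetilde\kk_{\bar A}$ exists. Then $D_i^{(r)}$ acts on it by $e_r(a_{i,1}+i,\dots,a_{i,p_i}+i)$ for $1\le r\le p_i$, while $E_i^{(r)}$ and $F_i^{(s)}$ act as $0$. By Theorem~\ref{T:gensandPBW} the operators $D_i^{(r)},E_i^{(r)},F_i^{(s)}$ satisfy the relations of $Y_{n,l}(\sigma)$, so relation \eqref{e:r3} forces $\sum_{t=0}^{r+s-1}D_{i+1}^{(r+s-1-t)}\widetilde D_i^{(t)}$ to act as $0$ for all $r>s_{i,i+1}$ and $s>s_{i+1,i}$; since $\widetilde D_i(u)=D_i(u)^{-1}$ as formal power series in $u^{-1}$, this says the coefficient of $u^{-m}$ in $D_{i+1}(u)D_i(u)^{-1}$ acts as $0$ for every $m\ge s_{i,i+1}+s_{i+1,i}+1=p_{i+1}-p_i+1$. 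Together with $D_1^{(r)}=0$ in $Y_{n,l}(\sigma)$ for $r>p_1$, an induction on $i$ shows that $D_i^{(r)}$ acts as $0$ for $r>p_i$ and that $P_{i+1}(u)=P_i(u)Q_i(u)$ for a monic $Q_i(u)$ of degree $p_{i+1}-p_i$. Hence $P_1\mid P_2\mid\cdots\mid P_n$, and by the combinatorial equivalence above $A$ is row equivalent to a column connected tableau.

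The step I expect to be the main point is recognizing that \eqref{e:r3} is the only relation of $Y_{n,l}(\sigma)$ that has content on $1$-dimensional modules and that, through the shift-matrix identity $s_{i,i+1}+s_{i+1,i}=p_{i+1}-p_i$, it translates exactly into the divisibility chain $P_1\mid\cdots\mid P_n$. The remaining effort is the bookkeeping that turns the explicit formula \eqref{e:invaraintsdef} into elementary symmetric polynomials on $\widetilde\kk_A$, and the pyramid combinatorics behind the equivalence with column connectedness.
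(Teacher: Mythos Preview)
Your proof is correct. The ``if'' direction is essentially the paper's argument: restrict the $U(\p)$-module $\widetilde\kk_A$ along $U(\g,e)\subseteq U(\p)$ and check the $D_i^{(r)}$ action using \eqref{e:invaraintsdef} and $\widetilde\rho=\eta+\rho_\h$.

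The ``only if'' direction, however, takes a genuinely different route. The paper invokes Lemma~\ref{L:1dcal} (Premet's computation of the abelianisation $Y_{n,l}(\sigma)^{\ab}$) to see that a $1$-dimensional module is determined by the $l$ scalars by which $D_i^{(r)}$ acts for $0<r\le p_i-p_{i-1}$, and then cites \cite[Lemma~2.6]{BrM} to show that any such tuple is realised by some column connected tableau; this is a parameter-count/surjectivity argument. You instead read relation~\eqref{e:r3} directly on the module: since commutators vanish on a $1$-dimensional representation, the coefficient of $u^{-m}$ in $D_{i+1}(u)\widetilde D_i(u)$ vanishes for $m\ge s_{i,i+1}+s_{i+1,i}+1=p_{i+1}-p_i+1$, and an induction starting from $D_1^{(r)}=0$ for $r>p_1$ forces $D_i^{(r)}$ to act by $0$ for $r>p_i$ and hence $P_{i+1}=P_iQ_i$ with $Q_i$ monic of degree $p_{i+1}-p_i$. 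The resulting divisibility chain $P_1\mid\cdots\mid P_n$ is then matched to column connectedness by your multiset/nesting argument. This is more elementary and self-contained: it avoids both Premet's abelianisation calculation and the appeal to \cite{BrM}. What the paper's approach buys in return is a structural statement you do not directly obtain, namely that $U(\g,e)^{\ab}$ is a polynomial ring on $l$ explicit generators (deduced in the paragraph following the paper's proof); your argument shows the $1$-dimensional modules are parametrised by the expected set without computing $U(\g,e)^{\ab}$ itself.
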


\begin{proof}
First let $A \in \Tab_\kk(\pi)$ be column connected, with entries
in the $i$th row labelled
$a_{i,1},\dots,a_{i,p_i}$ for $i=1,\dots,n$.  Recall the 1-dimensional
$U(\h)$-module $\widetilde \kk_A$ defined in \S\ref{ss:modules}; this can be inflated
to a $U(\p)$ module, which we also denote by $\widetilde \kk_A$.
Consider the action of the explicit
elements $D_i^{(r)} \in U(\p)$ given in \eqref{e:invaraintsdef} and \eqref{e:Dinup}
on the module $\widetilde \kk_A$.
The only summands in the expression for $D_i^{(r)}$ that do not act as zero on $\widetilde \kk_A$
are those which are products $\te_{i_1,j_1} \cdots \te_{i_s,j_s}$ such that
$i_1=j_1,\dots,i_s=j_s$: terms of this form only occur for $s=r$ and their sum is precisely
$e_r(\te_{i_1,j_1},\dots,\te_{ir_,j_r})$.  The $\t$-weight of $\widetilde \kk_A$ is $\lambda_A - \widetilde \rho$,
and we have $\widetilde \rho = \eta + \rho_\h$ by \eqref{e:tilderho}.  Thus we see that each $\te_{i,i}$
acts on $\widetilde \kk_A$ by $(\lambda_A+\rho_\h)(e_{i,i})$, because
of the shift of $\eta$ in the definition of $\te_{i,j}$.  Combining all of these
observations shows that the action of $D_i^{(r)}$ on $\widetilde \kk_A$
is given by $e_r(a_{i,1}+i,\dots,a_{i,p_i}+i)$.
This proves that $\widetilde \kk_{\bar A}$ exists, under the assumption that $A$ is
column connected.

We move on to prove that $\widetilde \kk_{\bar A}$ exists only if
$A$ is column connected.  To do this first note that the action $U(\g,e)$ on any 1-dimensional
factors to an action of the abelianization $U(\g,e)^{\ab}$ of $U(\g,e)$.
By Lemma~\ref{L:1dcal} and Theorem~\ref{T:gensandPBW} we know that
$U(\g,e)^{\ab}$ is generated by the images of the $l$ elements
$$
\{D_i^{(r)} \mid i=1,\dots,n, 0 < r \le p_i - p_{i-1}\}.
$$
So any 1-dimensional $U(\g,e)$-module is determined uniquely
by the action of these elements.
Thus to show that any one dimensional $U(\g,e)$-module is of the form
$\widetilde \kk_{\bar A}$ for some column connected $A \in \Tab_\kk(\pi)$, it
suffices to show that for any set
$$
\{a_i^{(r)} \mid i=1,\dots,n, 0 < r \le p_i - p_{i-1}\}.
$$
where $a_i^{(r)} \in \kk$, there is a column connected
$A \in \Tab_\kk(\pi)$ such that the action of $D_i^{(r)}$
on $\widetilde \kk_{\bar A}$ is given by $a_i^{(r)}$ for $i=1,\dots,n$ and $0 \le r \le p_i - p_{i-1}$.
This is proved ``over the complex numbers'' at the
end of \cite[Section 2]{BrM}, and depends crucially on \cite[Lemma~2.6]{BrM}.
It can be observed that the proof of this lemma is also valid
over an algebraically closed field of characteristic $p$.
From this we can deduce that all 1-dimensional
$U(\g,e)$-modules are of the form $\tilde \kk_{\bar A}$ for
some column connected $A \in \Tab_\kk(\pi)$.
\end{proof}

As mentioned after the statement of Lemma~\ref{L:1dcal}, the abelianization of
$Y_{n,l}(\sigma)$ is actually a polynomial algebra on the generators given in that
lemma.  This can now be deduced immediately from the proof of Theorem~\ref{T:1dW}, which
shows that there are 1-dimensional modules for $Y_{n,l}(\sigma)$ on which these generators
can act by arbitrary elements of $\kk$.

\section{1-dimensional modules for $U_0(\g,e)$}

From Theorem~\ref{T:1dW} we have a classification of 1-dimensional
$U(\g,e)$-modules given by the modules $\widetilde \kk_{\bar A}$ for $A \in \Tab_\kk(\pi)$
ranging over a set of representatives of row equivalence
classes of column connected tableaux.
Our next theorem determines for which of these 1-dimensional modules the action
of $U(\g,e)$ factors
through the quotient $U(\g,e) \onto U_0(\g,e)$ to give a 1-dimensional $U_0(\g,e)$-module.
Therefore, we obtain a classification of the 1-dimensional $U_0(\g,e)$-modules.

\begin{Theorem} \label{T:1dsforrestw}
Let $A \in \Tab_\kk(\pi)$ be column connected.  Then $\widetilde \kk_{\bar A}$
factors to a module for $U_0(\g,e)$ if and only if $A \in \Tab_{\F_p}(\pi)$.
\end{Theorem}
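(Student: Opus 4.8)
The plan is to translate the statement into the language of the $p$‑centre $Z_p(\g,e)=Z_p(\p)^{\tw(M)}$ and the twisted $M$‑action on $\p^*$, and then exploit a nilpotency constraint. Fix a column connected $A\in\Tab_\kk(\pi)$, and for each column $k$ of $\pi$ let $c_k\in\kk$ be the common value of $\lambda_A-\widetilde\rho$ on $e_{j,j}$ for the boxes $j$ in column $k$ (well defined since $A$ is column connected). As $\widetilde\rho=\eta+\rho_\h$ has integer entries, the entry of $A$ in a box of column $k$ lies in $\F_p$ if and only if $c_k\in\F_p$, so $A\in\Tab_{\F_p}(\pi)$ if and only if $c_k\in\F_p$ for every column $k$. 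Let $\psi_A\in\p^*$ be the character of $\p$ which kills $[\p,\p]$ and sends $e_{j,j}$ to $c_{\col(j)}-c_{\col(j)}^{1/p}$ (a character, since this value depends only on $\col(j)$). The first step is to check that $\widetilde\kk_A$, viewed as a $U(\p)$‑module, factors through $U_{\psi_A}(\p)$, i.e. that its character $\nu_A$ annihilates each generator $x^p-x^{[p]}-\psi_A(x)^p$ of $J^\p_{\psi_A}$ for $x\in\p$. Writing $x=x_0+x_+$ with $x_0$ block diagonal in $\h$ and $x_+$ in the nilradical, a direct computation reduces this — using that $\nu_A$ factors through $\p/[\p,\p]\iso\bigoplus_k\gl_{q_k}/\sl_{q_k}$, that $x^{[p]}\equiv x_0^{\,p}\pmod{[\p,\p]}$, and that $\operatorname{tr}(Y^p)=\operatorname{tr}(Y)^p$ in characteristic $p$ — to the identity $\sum_k\bigl((c_k^p-c_k)-(c_k-c_k^{1/p})^p\bigr)\operatorname{tr}(x_0^{(k)})^p=0$, which holds termwise. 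Restricting along $U(\g,e)\subseteq U(\p)$, it follows that $\widetilde\kk_{\bar A}$ factors through $U_{\psi_A}(\g,e)$; in particular, if $A\in\Tab_{\F_p}(\pi)$ then $\psi_A=0$ and $\widetilde\kk_{\bar A}$ factors through $U_0(\g,e)$, which is one implication.

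For the converse, suppose $\widetilde\kk_{\bar A}$ factors through $U_0(\g,e)$. Since it also factors through $U_{\psi_A}(\g,e)$, Lemma~\ref{L:samered} together with the discussion following it shows that $\psi_A$ is conjugate to $0$ under the twisted $M$‑action on $\p^*$. Now $\psi_A$ agrees on $\p$ with the trace pairing against the diagonal matrix $w_A:=\diag\bigl(c_{\col(1)}-c_{\col(1)}^{1/p},\dots,c_{\col(N)}-c_{\col(N)}^{1/p}\bigr)\in\h$, and for $g\in M$, $x\in\p$ one has $(\tw(g)\cdot0)(x)=\chi(g^{-1}\cdot x-x)=(\Ad(g)e-e,\,x)$. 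Hence the conjugacy says precisely that $\Ad(g)e-e-w_A$ is orthogonal to $\p$ for some $g\in M$, i.e. lies in $\p^\perp=\bigoplus_{k>0}\g(k)$. But $M$ is generated by root subgroups inside $\bigoplus_{k<0}\g(k)$ and $e\in\g(1)$, so $\Ad(g)e-e\in\bigoplus_{k\le0}\g(k)$; combined with $w_A\in\g(0)$ this forces $\Ad(g)e=e+w_A$.

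The conclusion is then immediate: $e+w_A$ is $G$‑conjugate to the nilpotent element $e$, hence nilpotent; but $e+w_A\in\p$ is block upper triangular for the flag defining $\p$, with $k$‑th diagonal block the scalar matrix $(c_k-c_k^{1/p})I_{q_k}$, so its characteristic polynomial is $\prod_k\bigl(t-(c_k-c_k^{1/p})\bigr)^{q_k}$. Nilpotency forces $c_k-c_k^{1/p}=0$, that is $c_k\in\F_p$, for every column $k$, and therefore $A\in\Tab_{\F_p}(\pi)$ by the first paragraph. I expect the main obstacle to be the bookkeeping in the first two steps — identifying the precise character $\psi_A$ through which $\widetilde\kk_A$ factors, and correctly reducing the twisted $M$‑conjugacy to the equation $\Ad(g)e=e+w_A$ (in particular using $\p^\perp=\bigoplus_{k>0}\g(k)$ rather than a coarser description); once these are in place, the eigenvalue argument in the last step is very short.
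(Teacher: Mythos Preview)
Your proof is correct and follows essentially the same approach as the paper: identify the linear form $\psi_A\in\p^*$ through which $\widetilde\kk_A$ factors, invoke Lemma~\ref{L:samered} to reduce to twisted $M$-conjugacy of $\psi_A$ and $0$, translate this via $\p^*\iso e+\m^\perp$ into $M$-conjugacy of $e$ and $e+w_A$, and conclude by nilpotency. Your presentation is somewhat more explicit than the paper's (you derive $\Ad(g)e=e+w_A$ directly from the grading and compute the characteristic polynomial, and you handle the $p$-th root in $\psi_A$ more carefully), but the underlying argument is the same.
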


\begin{proof}
Let $A \in \Tab_\kk(\pi)$ be column connected with entries labelled
$a_1,\dots,a_N$ as usual.  We see that $\widetilde{\kk}_A$ factors
through $U_{\psi_A}(\p)$, where $\psi_A \in \p^*$ is defined by
$\psi_A(e_{i,j}) = 0$ for $i \ne j$, and $\psi_A(e_{i,i}) = a_i^p - a_i$.
Therefore, $\widetilde \kk_{\bar A}$ is a module for
the reduced $W$-algebra $U_{\psi_A}(\g,e)$ as defined in \S\ref{ss:walg}.
From the discussion after Lemma~\ref{L:samered},  we see that
$\widetilde \kk_{\bar A}$ factors to a module for $U_0(\g,e)$ if and only if
$0$ and $\psi_A$ are conjugate under the twisted $M$-action.

We next show that $\psi_A$ is conjugate to 0 under the twisted $M$-action only
if $\psi_A = 0$.  To do this note that under the identification $\p^* \iso e + \m^{\perp}$
we have that $0$ corresponds to $e$ and $\psi_A$ corresponds to an element $e + \diag(a_1^p-a_1,\dots,a_N^p-a_N)$,
where we recall that $\diag(d_1,\dots,d_N)$ denotes the diagonal matrix with $i$th entry $d_i$.
We have $e + \diag(a_1^p-a_1,\dots,a_N^p-a_N) \in \t$ is nilpotent only if $\diag(a_1^p-a_1,\dots,a_N^p-a_N) =0$.
Therefore, $e$ is not in the same $M$-orbit as $e + \diag(a_1^p-a_1,\dots,a_N^p-a_N)$ unless
$a_i^p-a_i = 0$ for all $i$.

Hence, we deduce that $\widetilde \kk_{\bar A}$ factors to
a module for $U_0(\g,e)$ if only if $a_i^p-a_i = 0$ for all $i$.
This is case if and only if $a_i \in \F_p$ for all $i$, so that
$A \in \Tab_{\F_p}(\pi)$.
\end{proof}

\section{Minimal dimensional modules for $U_\chi(\g)$}

Armed with Premet's equivalence (Theorem~\ref{T:premequiv}), Theorem~\ref{T:1dsimplespara}
and Theorem~\ref{T:1dsforrestw}, we are ready to prove our main theorem.

\begin{proof}[Proof of Theorem~\ref{T:main}]
Let $c_\pi$ the number
of row equivalence classes in $\Tab_{\F_p}(\pi)$ containing a column connected tableau.
By Theorem~\ref{T:1dsforrestw}, we know that the number of isomorphism classes of
$1$-dimensional modules for $U_0(\g,e)$-modules is $c_\pi$.  Thus by Theorem~\ref{T:premequiv}
the number of minimal dimensional $U_\chi(\g)$-modules is $c_\pi$.

Given column connected $A \in \Tab_{\F_p}(\pi)$,
we have that $L_\chi(A)$ is $p^{d_\chi}$-dimensional by Theorem~\ref{T:1dsimplespara}.
Also up to isomorphism $L_\chi(A)$ depends only on the row equivalence class of $A$.
Therefore, the modules $L_\chi(A)$ for $A$ ranging over a set of row equivalence
classes of column connected tableaux in $\Tab_{\F_p}(\pi)$ give all $c_\pi$ isomorphism
classes of minimal dimensional modules.
\end{proof}

Now that Theorem~\ref{T:main} is proved, Corollary~\ref{C:moeglin} follows, as explained in the
introduction.

\begin{Remark} \label{R:corresp}
It is interesting to know the bijection given by Premet's equivalence between the sets
of isomorphism classes of the 1-dimensional
$U_0(\g,e)$-modules $\widetilde \kk_{\bar A}$ and those of the minimal dimensional
$U_\chi(\g)$-modules $L_\chi(A)$, as $A$ ranges over a set of representatives
of row equivalence classes of column connected tableau in $\Tab_{\F_p}(\pi)$.
It turns out that this bijection sends $\widetilde \kk_{\bar A}$ to $L_\chi(A)$
and we briefly outline some steps that can be used to verify this.

Use the fact that $L_\chi(A) \iso N_\chi(A) = U_\chi(\g) \otimes_{U_0(\p)} \bar \kk_A$
as is given in Theorem~\ref{T:1dsimplespara}.  Then show that $N_\chi(A)^{\m_\chi} \iso \widetilde \kk_{\bar A}$
using the following arguments; we recall here that $N_\chi(A)^{\m_\chi}$ is defined in \eqref{e:premequiv2}.

Consider the dual $N_\chi(A)^*$ viewed as a right module for $U_\chi(\g)$.
We observe that $\lambda_A - \bar \rho - \beta = \lambda - \tilde \rho$ is the
weight of a $1$-dimensional right $U_0(\h)$-module, which we denote by $\kk_{\lambda_A - \bar \rho - \beta}$.
Using that any $U_\chi(\g)$-module is free as a $U_\chi(\m)$-module, it can
be proved that $N_\chi(A)^* \iso \kk_{\lambda_A - \bar \rho - \beta} \otimes_{U_0(\p)} U_\chi(\g)$.
Note that it is more natural to consider weight $\lambda_A - \bar \rho + (p-1)\beta$ to prove this isomorphism,
and use that this is the $\t$-weight of $\prod_{\alpha \in \Phi(-)} e_\alpha^{p-1} \bar 1_A \in N_\chi(A)$, where $\bar 1_A$
is the generator of $\bar \kk_A$.

Next consider the {\em Whittaker coinvariants} of $N_\chi(A)^*$.  This is defined by $N_\chi(A)^*/N_\chi(A)^*\m_\chi$,
and is a right module for $U(\g,e)$.  It is quite straightforward to show that the Whittaker coinvariants
of $\kk_{\lambda_A - \bar \rho - \beta} \otimes_{U_0(\p)} U_\chi(\g)$ is isomorphic to the
restriction of right $U_0(\p)$-module $\kk_{\lambda_A - \bar \rho - \beta}$ to $U(\g,e)$, so the same is true
for the Whittaker coinvariants of $N_\chi(A)^*$.
Standard arguments show that $N_\chi(A)^{\m_\chi} \iso (N_\chi(A)^*/\m_\chi N_\chi(A)^*)^*$.
Then it can be deduced that $N_\chi(A)^{\m_\chi}$ is isomorphic to the restriction of the left $U_0(\p)$-module
$\kk_{\lambda_A - \bar \rho - \beta}$ to $U_0(\g,e)$.

It just remains to just use the fact that $\bar \rho + \beta = \widetilde \rho$ to deduce that
$N_\chi(A)^{\m_\chi} \iso \widetilde \kk_{\bar A}$.
\end{Remark}

\end{document}